\newcommand{\sn}{\mathbf{s}}
\newcommand{\Flags}{\mathrm{Fl}}
\newcommand{\Grass}{\mathrm{Gr}}
\newcommand{\Pic}{\mathrm{Pic}}
\newcommand{\C}{\mathbb{C}}
\numberwithin{equation}{section}
\newtheorem{theorem}{Theorem}[section]
\newtheorem{lemma}[theorem]{Lemma}
\newtheorem{proposition}[theorem]{Proposition}
\newtheorem{corollary}[theorem]{Corollary}
\theoremstyle{definition}
\newtheorem{example}[theorem]{Example}
\newtheorem{definition}[theorem]{Definition}
\newtheorem{remark}[theorem]{Remark}
\newcommand{\Gr}{\mathrm{Gr}}
\newcommand{\Lie}{\mathrm{Lie}}
\newcommand{\GL}{\mathrm{GL}}
\newcommand{\SL}{\mathrm{SL}}
\newcommand{\gl}{\mathfrak{gl}}
\renewcommand{\sl}{\mathfrak{sl}}
\newcommand{\nil}{\mathfrak{nil}}
\newcommand{\F}{\mathcal{F}}
\newcommand{\diag}{\qopname\relax o{diag}}
\begin{document}

\title{On homogeneous spaces for diagonal ind-groups}
\author{Lucas Fresse}
\address{Universit\'e de Lorraine, CNRS, Institut \'Elie Cartan de Lorraine, UMR 7502, Vandoeu\-vre-l\`es-Nancy, F-54506, France}
\email{lucas.fresse@univ-lorraine.fr}
\author{Ivan Penkov}
\address{Constructor University, 28759 Bremen, Germany}
\email{ipenkov@constructor.university}

\begin{abstract}
We study the homogeneous ind-spaces $\GL(\sn)/\mathbf{P}$ where $\GL(\sn)$ is a strict diagonal ind-group defined by a supernatural number $\sn$
and $\mathbf{P}$ is a parabolic ind-subgroup of $\GL(\sn)$. We construct an explicit exhaustion
of $\GL(\sn)/\mathbf{P}$ by finite-dimensional partial flag varieties.
As an application, we characterize all locally projective $\GL(\infty)$-homogeneous spaces,
and some direct products of such spaces, which are $\GL(\sn)$-homogeneous for a fixed $\sn$.
The very possibility for a $\GL(\infty)$-homogeneous space to be $\GL(\sn)$-homogeneous for a strict diagonal ind-group $\GL(\sn)$ arises from the fact that the automorphism group of a $\GL(\infty)$-homogeneous space is much larger than
$\GL(\infty)$.
\end{abstract}

\keywords{Diagonal ind-group; generalized flag; embedding of flag varieties}
\subjclass[2010]{22E65; 14M15; 14M17}

\maketitle

\setcounter{tocdepth}{1}

\tableofcontents

\section{Introduction}

The ind-group $\GL(\infty)=\lim\limits_{\to}\GL(n)=\bigcup_{n\geq 1}\GL(n)$ is a most natural direct limit algebraic group, and its locally projective homogeneous spaces are quite well studied by now, see for instance \cite{Dimitrov-Penkov}, \cite{DPW}, \cite{IP}, \cite{PT}. A larger class of direct limit algebraic groups are the so called diagonal ind-groups. A rather obvious such group, non-isomorphic to $\GL(\infty)$, is the ind-group $\GL(2^\infty)=\lim\limits_\to\GL(2^n)$ where $\GL(2^{n-1})$ is embedded into $\GL(2^n)$ via the map
\[x\mapsto \begin{pmatrix} x & 0 \\ 0 & x \end{pmatrix}.\]
A general definition of a diagonal Lie algebra has been given by A. Baranov and A. Zhilinskii in \cite{BZ}, and this definition carries over in a straightforward way to classical Lie groups, producing the class of diagonal Lie groups.

Locally projective homogeneous ind-spaces of diagonal ind-groups have been studied much less extensively than those of $\GL(\infty)$, see \cite{DPW} and \cite{DP1}.
In this paper, we undertake such a study for a class of diagonal ind-groups which we call strict diagonal ind-groups of type A. These ind-groups are characterized by supernatural numbers $\sn$, and are denoted $\GL(\sn)$.
We consider reasonably general parabolic subgroups $\mathbf{P}\subset\GL(\sn)$ and describe the homogeneous ind-space $\GL(\sn)/\mathbf{P}$ as direct limits of embeddings
$$
G_{n-1}/P_{n-1}\to G_n/P_n
$$
of usual ind-varieties. Our main result is an explicit formula for the so arising embeddings, and this formula is an analogue of the formula for standard extensions introduced in \cite{PT} (and used in a particular case in \cite{Dimitrov-Penkov}).

The class of locally projective homogeneous ind-spaces of strict (and of general) diagonal ind-groups will require further detailed studies. In the current paper we restrict ourselves to the following application of the above explicit formula: we determine which locally projective homogeneous ind-spaces of $\GL(\infty)$, i.e., ind-varieties of generalized flags \cite{Dimitrov-Penkov}, are also $\GL(\sn)$-homogeneous for a given infinite supernatural number $\sn$. Furthermore, we also characterize explicitly direct products of ind-varieties of generalized flags which are $\GL(\sn)$-homogeneous.

The very possibility of an ind-variety of generalized flags being a homogeneous space for $\GL(\sn)$, where $\sn$ is an infinite supernatural number, is an interesting phenomenon, and can be seen as one possible motivation for our studies of $\GL(\sn)$-homogeneous ind-spaces. Indeed, recall the following fact for a finite-dimensional algebraic group.
If $G$ is a centerless simple algebraic group of classical type and rank at least four and $P$ is a parabolic subgroup, 
a well-known result of A. Onishchik \cite{O} implies that
the connected component of unity of the automorphism group of the homogeneous space $G/P$ coincides with $G$, except in two special cases when $G/P$ is a projective space and $G$ is a symplectic group,
and when $G/P$ is a maximal orthogonal isotropic grassmannian and $G$ is an orthogonal group of type B.
Consequently, unless $G/P$ is a projective space or a maximal isotropic grassmannian, $G/P$ cannot be a homogeneous $G'$-space for a centerless algebraic group $G'\not\cong G$.

The explanation of why the situation is very different if one replaces $G$ by the ind-group $\GL(\infty)$, is that, as shown in \cite{IP}, the automorphism group of an ind-variety of generalized flags is much larger than $\GL(\infty)$. In this way, our results provide embeddings of $\GL(\sn)$ into such automorphism groups, with the property that the action of $\GL(\sn)$ on the respective ind-variety of generalized flags is transitive.
As a corollary we obtain that a ``generic'' ind-variety of generalized flags is $\GL(\sn)$-homogeneous also for any  ind-group $\GL(\sn)$. This statement is in some sense opposite to the classical statement in the finite-dimensional case.

The paper is organized as follows.
Sections \ref{section-2}, \ref{section-3}, \ref{section-4} are devoted to preliminaries. We start by introducing the ind-groups $\GL(\sn)$ where $\sn$ is a supernatural number. We then discuss Cartan, Borel, and parabolic ind-subgroups of $\GL(\sn)$.
In Section \ref{section-3} we review the notions of linear embedding of flag varieties and standard extension of flag varieties, and in Section \ref{section-4} we recall the necessary results on ind-varieties of generalized flags. 

In Section \ref{section-5} we prove our explicit formula for embeddings of partial flag varieties $\GL(n)/Q\hookrightarrow\GL(dn)/P$ induced by pure diagonal embeddings
$\GL(n)\hookrightarrow \GL(dn)$.
In Section \ref{section-6} we use this formula to describe all $\GL(\sn)$-homogeneous ind-varieties of generalized flags.
Finally, in Section \ref{section-7} we characterize  direct products of ind-varieties of generalized flags, which are $\GL(\sn)$-homogeneous.

\subsection*{Acknowledgement}
The work of I. P. was supported in part by DFG Grant  PE 980/8-1.

\section{The ind-group $\GL(\mathbf{s})$}

\label{section-2}

\subsection{Direct systems associated to a supernatural number}

Throughout this paper we consider a fixed supernatural number $\sn$, in other words
\[\sn=\prod_{p\in\mathcal{P}}p^{\alpha_p}\]
where $\mathcal{P}$ is a (possibly infinite) set of prime numbers and $\alpha_p$ is either a positive integer or $\infty$. Moreover, we suppose that $\sn$ is infinite, hence at least one of the exponents $\alpha_p$ is infinite or the set $\mathcal{P}$ is infinite.
By $\mathcal{D}(\sn)$ we denote the set of finite divisors of $\sn$.

Let $\mathcal{A}$ be a direct system of sets with injective maps. We say that $\mathcal{A}$ is \textit{associated to the supernatural number $\sn$} if the sets in $\mathcal{A}$
\[A(s),\quad s\in\mathcal{D}(\sn)\]
are parametrized by the finite divisors of $\sn$, and the injective maps
\[\delta_{s,s'}:A(s)\hookrightarrow A(s')\]
correspond to pairs of divisors $s,s'\in\mathcal{D}(\sn)$ such that $s|s'$.
Then, if $L(\mathcal{A})=\lim\limits_\to A(s)$, the resulting map
\[\delta_s:A(s)\hookrightarrow L(A)\]
is injective for every $s\in\mathcal{D}(\sn)$.


\begin{definition}
\label{s-sequence}
We call {\em exhaustion of $\sn$} any sequence $\{s_n\}_{n\geq 1}$ of integers such that
\begin{itemize}
\item $s_n\in\mathcal{D}(\sn)$ for all $n$,
\item $s_n$ divides $s_{n+1}$ for all $n$, 
\item any $s\in\mathcal{D}(\sn)$ is a multiple of $s_n$ for some $n$.
\hfill$\blacksquare$
\end{itemize}
\end{definition}

\begin{lemma}
\label{L2.1}
Let $\{s_n\}_{n\geq 1}$ be an exhaustion of $\sn$. Then $L(\mathcal{A})$ coincides with the limit of the inductive system formed by the sets $A(s_n)$ and the maps $\delta_n=\delta_{s_n,s_{n+1}}:A(s_n)\hookrightarrow A(s_{n+1})$.
\end{lemma}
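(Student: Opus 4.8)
The plan is to recognize this as the standard fact that passing to a cofinal subsystem does not change a direct limit. The only property of the sequence $\{s_n\}$ that I will use is the one extracted from Definition~\ref{s-sequence}: since $s_n$ divides $s_{n+1}$ for all $n$, the family $\{s_n\}_{n\geq 1}$ is a chain, hence a directed subset of $(\mathcal{D}(\sn),\mid)$; and the exhaustion condition guarantees that this chain is \emph{cofinal} in $(\mathcal{D}(\sn),\mid)$, i.e.\ every $s\in\mathcal{D}(\sn)$ divides some $s_n$, and therefore $s\mid s_n$ for all sufficiently large $n$ (once $s\mid s_{n_0}$ we get $s\mid s_{n_0}\mid s_n$ for every $n\ge n_0$). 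This cofinality is the engine of the whole argument.

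First I would set up the comparison map. Write $L=L(\mathcal{A})$ with its structure maps $\delta_s\colon A(s)\to L$, and let $L'=\lim\limits_{\to}A(s_n)$ denote the limit of the subsystem, with structure maps $\delta'_n\colon A(s_n)\to L'$. The transitivity relation $\delta_{s,s''}=\delta_{s',s''}\circ\delta_{s,s'}$ satisfied by a direct system shows in particular that $\delta_{s_{n+1}}\circ\delta_n=\delta_{s_n}$, so the family $\{\delta_{s_n}\}_{n\geq 1}$ is a cone on the subsystem; by the universal property of $L'$ it factors as $\delta_{s_n}=\phi\circ\delta'_n$ for a unique map $\phi\colon L'\to L$. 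It remains to check that $\phi$ is a bijection.

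Then I would prove bijectivity directly from cofinality. For surjectivity, an arbitrary class of $L$ is represented by some $x\in A(s)$; choosing $n$ with $s\mid s_n$, the element $\delta_{s,s_n}(x)\in A(s_n)$ represents the same class of $L$ and visibly lies in the image of $\phi$. For injectivity, suppose $x\in A(s_m)$ and $y\in A(s_k)$ satisfy $\delta_{s_m}(x)=\delta_{s_k}(y)$ in $L$; by definition of the limit over $\mathcal{D}(\sn)$ there is $s$ with $s_m\mid s$, $s_k\mid s$ and $\delta_{s_m,s}(x)=\delta_{s_k,s}(y)$. Picking $n$ with $s\mid s_n$ and applying $\delta_{s,s_n}$, transitivity gives $\delta_{s_m,s_n}(x)=\delta_{s_k,s_n}(y)$ in $A(s_n)$, so $x$ and $y$ already represent the same class of $L'$; hence $\phi$ is injective.

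Since all the maps are injective one may, alternatively and more concretely, identify each $A(s)$ with its image $\delta_s(A(s))\subseteq L$, so that divisibility becomes inclusion and $L=\bigcup_{s\in\mathcal{D}(\sn)}A(s)$; the statement then collapses to the evident equality $L=\bigcup_{n\geq 1}A(s_n)$ of a set with the union of the cofinal increasing chain $A(s_1)\subseteq A(s_2)\subseteq\cdots$. I do not expect a genuine obstacle here, as the content is purely order-theoretic. The only point to handle with a little care is the bookkeeping between the consecutive transition maps $\delta_n$ that define the subsystem and the general maps $\delta_{s_m,s_n}=\delta_{n-1}\circ\cdots\circ\delta_m$ used in the cofinality arguments, which is exactly what the transitivity axiom of a direct system supplies.
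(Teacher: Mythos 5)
Your proof is correct and matches the paper's approach exactly: the paper's own proof of this lemma is the single word ``Straightforward,'' and your cofinality argument --- the comparison map $\phi\colon L'\to L$ from the universal property, or equivalently the concrete identification of each $A(s)$ with its image so that $L(\mathcal{A})=\bigcup_{s\in\mathcal{D}(\sn)}A(s)=\bigcup_n A(s_n)$ --- is the standard way to fill it in. One small remark: your argument (correctly) uses that every $s\in\mathcal{D}(\sn)$ \emph{divides} some $s_n$, which is the reading the lemma actually requires of the last condition in Definition~\ref{s-sequence}, even though that condition is literally phrased in the opposite direction (``$s$ is a multiple of $s_n$''), under which a constant sequence would qualify and the lemma would fail.
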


\begin{proof}
Straightforward.
\end{proof}

According to the lemma, the limit $L(\mathcal{A})$ can be described in terms of an exhaustion
\[L(\mathcal{A})=\bigcup_{n}A(s_n).\]
\begin{itemize}
\item
In the case where  $A(s)$ are vector spaces and the maps $\delta_{s,s'}$ are linear, then $L(\mathcal{A})$ is the direct limit in the category of vector spaces.
\item In the case where  $A(s)$ are algebraic varieties and the maps $\delta_{s,s'}$ are closed embeddings, the limit $L(\mathcal{A})$ is an ind-variety as defined in \cite{Safarevich} and \cite{Kumar}.
\item In the case where  $A(s)$ are algebraic groups and the maps $\delta_{s,s'}$ are group homomorphisms, the limit is both an ind-variety and a group. It is in particular an ind-group\footnote{An ind-group is an ind-variety with a group structure such that the multiplication $(x,y)\mapsto xy$ and the inversion $x\mapsto x^{-1}$ are morphisms of ind-varieties.  }.
\end{itemize}

\subsection{Definition of the groups $\GL(\sn)$ and $\SL(\sn)$}

Whenever $s,s'$ are two positive integers such that $s$ divides $s'$, we have a diagonal embedding 
$$\delta_{s,s'}:\GL(s)\to\GL(s'),\ x\mapsto \mathrm{diag}(\underbrace{x,\ldots,x}_{\mbox{\scriptsize $\frac{s'}{s}$ blocks}}).$$
We refer to the embeddings $\delta_{s,s'}$ as {\it strict diagonal embeddings}.
A more general definition of \textit{diagonal embeddings} is given, at the Lie algebra level, in \cite{BZ}. 
%

The groups $\GL(s)$ (for $s\in\mathcal{D}(\sn)$)
and the maps $\delta_{s,s'}$ (for all pairs of integers $s,s'\in\mathcal{D}(\sn)$ such that $s$ divides $s'$) form a direct system. By definition, the ind-group $\GL(\sn)$ is the limit of this direct system.

The group $\GL(\sn)$ can be viewed as the group of infinite $\mathbb{Z}_{>0}\times\mathbb{Z}_{>0}$-matrices consisting of one diagonal block of size equal to any (finite) divisor $s$ of $\sn$, repeated infinitely many times along the diagonal:
\begin{eqnarray}
\label{matrix-interpretation}
\GL(\sn) & = & \left\{
\begin{pmatrix} x & 0 & \cdots \\ 0 & x & \ddots \\ \vdots & \ddots & \ddots \\\end{pmatrix}:x\in\GL(s),\ s\in\mathcal{D}(\sn)
\right\}.
\end{eqnarray}

Similarly, we define $\SL(\sn)$ as the limit of the direct system formed by the groups $\SL(s)$ and the same maps $\delta_{s,s'}$. In fact, $\SL(\sn)$ is the derived group of $\GL(\sn)$.
By $\gl(\sn)$ and $\sl(\sn)$, we denote the Lie algebras of $\GL(\sn)$ and $\SL(\sn)$, respectively.
Thus $\sl(\sn)=[\gl(\sn),\gl(\sn)]$.

\begin{remark}
\label{R2.4}
Lemma \ref{L2.1} shows that the group $\GL(\sn)$ can be obtained through any exhaustion 
\[\GL(\sn)=\bigcup_{n}\GL(s_n)\]
where $\{s_n\}_{n\geq 1}$ is an exhaustion of $\sn$ (see Definition \ref{s-sequence}). However, the ind-group $\mathbf{GL}(\sn)$ has various other exhaustions.
If we set
\[
\mathbf{K}(n):=\underbrace{\GL(s_n)\times\cdots\times \GL(s_n)}_{\mbox{\scriptsize $\frac{s_{n+1}}{s_n}$ factors}}
\]
and
\[
\psi_n:\mathbf{K}(n)\to\mathbf{K}(n+1),\ (x_1,\ldots,x_{d_n})\mapsto (\underbrace{\diag(x_1,\ldots,x_{d_n}),\ldots,\diag(x_1,\ldots,x_{d_n})}_{\frac{s_{n+2}}{s_{n+1}}\mbox{\scriptsize \ terms} }),
\]
then the direct system $\{\mathbf{K}(n)\stackrel{\psi_n}{\to}\mathbf{K}(n+1)\}$ intertwines in a natural way with the direct system $\{\GL(s_n)\stackrel{\delta_{s_n,s_{n+1}}}{\longrightarrow}\GL(s_{n+1})\}$
considered above.
This yields an equality
\[\mathrm{GL}(\sn)=\lim_\to \GL(s_n)=\lim_\to\mathbf{K}(n).\]
\hfill$\blacksquare$

\end{remark}

We say that two exhaustions $\mathbf{G}=\bigcup_nG_n=\bigcup_n G'_n$ of a given ind-group are \textit{equivalent} if there are $n_0\geq 1$ and a commutative diagram
$$
\xymatrix{
G_{n_0} \ar[d]^\sim \ar[r] & \cdots \ar[r] & G_n \ar[d]^\sim\ar[r] & G_{n+1} \ar[d]^\sim\ar[r] & \cdots \\
G'_{n_0} \ar[r] & \cdots \ar[r] & G'_n \ar[r] & G'_{n+1} \ar[r] & \cdots \\
}
$$
such that the vertical arrows are isomorphisms of algebraic groups and the horizontal arrows are the embeddings of the exhaustions.

\begin{lemma}
\label{L2.5-new}
{\rm (a)} Any exhaustion of $\SL(\sn)$ by almost simple, simply connected algebraic groups is equivalent to $\{\SL(s_n),\delta_{s_n,s_{n+1}}\}_{n\geq 1}$ for an exhaustion $\{s_n\}_{n\geq 1}$ of $\sn$.

{\rm (b)} Any exhaustion of $\GL(\sn)$ by classical groups (i.e. by groups of the form $\GL(n)$, $\SL(n)$, $\mathrm{SO}(n)$, or $\mathrm{Sp}(n)$) is equivalent to $\{\GL(s_n),\delta_{s_n,s_{n+1}}\}_{n\geq 1}$ for an exhaustion $\{s_n\}_{n\geq 1}$ of $\sn$.

%
\end{lemma}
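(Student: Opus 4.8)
The plan is to compare the given exhaustion with the standard strict diagonal one by interleaving the two, and then to show by representation theory that the intermediate groups are forced to be of type $A$ and the embeddings forced to be strict diagonal.

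First I would interleave. Each term $\SL(s_n)$ is a finite-dimensional closed subgroup, so the inclusion $\SL(s_n)\hookrightarrow\SL(\sn)$ is a morphism of ind-varieties whose (irreducible, finite-dimensional) image lies in some $H_m$; symmetrically each $H_m$ lies in some $\SL(s_n)$. Passing to cofinal subsequences---which changes neither exhaustion up to equivalence---I obtain a commutative ladder of injective homomorphisms of algebraic groups
\[\SL(s_{n_1})\xrightarrow{\,f_1\,}H_{m_1}\xrightarrow{\,g_1\,}\SL(s_{n_2})\xrightarrow{\,f_2\,}H_{m_2}\xrightarrow{\,g_2\,}\cdots,\]
in which every composite $g_kf_k=\delta_{s_{n_k},s_{n_{k+1}}}$ is strict diagonal, since both endpoints are terms of the standard exhaustion and the composite of strict diagonal embeddings is again strict diagonal. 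By transitivity of equivalence it suffices to prove that $\{H_{m_k},\,f_{k+1}g_k\}$ is equivalent to a standard exhaustion. As $\dim H_{m_k}\to\infty$ the ranks tend to infinity, so after discarding finitely many terms I may assume $s_{n_k}\ge 3$ and, since exceptional types have bounded rank, that every $H_{m_k}$ is classical.

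The core is the rigidity of one rung. Write $a=s_{n_k}$, $b=s_{n_{k+1}}$, $H=H_{m_k}$, so $\SL(a)\xrightarrow{f}H\xrightarrow{g}\SL(b)$ with $gf=\delta_{a,b}$. Restricting the natural module $\C^b$ to $H$ via $g$ gives an $H$-module $W$, and restricting further to $\SL(a)$ via $f$ gives $W|_{\SL(a)}\cong(\C^a)^{\oplus b/a}$, isotypic of standard type. Since $a\ge 3$ the module $\C^a$ is not self-dual, so $W\not\cong W^*$ as $\SL(a)$-modules and hence as $H$-modules. This excludes types $B$, $C$ and $D$ of even rank, for which $-1$ lies in the Weyl group and every module is self-dual; as $H$ is classical, only type $A$ and type $D$ of odd rank remain. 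I would rule out the latter by a weight argument: a non-self-dual representation of type $D_n$ must contain a summand involving a half-spin representation, whose $T$-weights restrict to the torus of $\SL(a)$ in a pattern that cannot decompose into copies of the single Weyl orbit of the standard weight; alternatively one invokes the classification of diagonal embeddings of classical Lie algebras from \cite{BZ}. Hence $H\cong\SL(c)$.

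With $H=\SL(c)$ it remains to see that $f$ and $g$ are strict diagonal. The key sub-lemma is that if $\SL(a)\xrightarrow{f}\SL(c)\xrightarrow{g}\SL(b)$ has strict diagonal composite and $a\ge 3$, then $f$ and $g$ are each strict diagonal up to conjugacy and $a\mid c\mid b$; this is where \cite{BZ} (or a direct highest-weight analysis showing that every irreducible $\SL(c)$-summand of $W$ restricting to copies of $\C^a$ must be the natural module, and that $\C^c|_{\SL(a)}$ is then isotypic standard) is essential, and it is the main obstacle. Granting it, each rung is strict diagonal, the maps $f_{k+1}g_k\colon\SL(c_k)\to\SL(c_{k+1})$ are strict diagonal, and $\{c_k\}$ satisfies $c_k\mid c_{k+1}$ and is cofinal in $\mathcal{D}(\sn)$ (because $s_{n_k}\mid c_k\mid s_{n_{k+1}}$); thus $\{c_k\}$ is an exhaustion of $\sn$ and $\{H_{m_k}\}$ is equivalent to $\{\SL(c_k),\delta\}$, proving (a). Part (b) is identical with $\GL$ in place of $\SL$: the same self-duality obstruction excludes the orthogonal and symplectic terms, and tracking the one-dimensional central torus shows the terms are $\GL(c_k)$ rather than $\SL(c_k)$.
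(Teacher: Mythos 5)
Your skeleton — interleave the two exhaustions into a ladder $\SL(s_{n_k})\xrightarrow{f_k}H_{m_k}\xrightarrow{g_k}\SL(s_{n_{k+1}})$ with strict diagonal composites, then prove rigidity of one rung — is exactly the paper's, and your self-duality trick does correctly eliminate types $B$, $C$ and $D$ of even rank, even more cheaply than the paper does, since it uses only the known decomposition of $\C^b|_{\SL(a)}$ and no information about $f$. But the two places you yourself flag are where the content of the lemma lies, and neither is closed. For type $D$ of odd rank your weight argument is circular: ``how the $T$-weights restrict to the torus of $\SL(a)$'' is determined by the embedding $f$, which at that stage is completely unknown — controlling $f$ is precisely what is being proved. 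The paper avoids any case split on rank parity: by \cite[Lemma 2.7]{BZ} each factor of a (strict) diagonal composite is itself a diagonal embedding, so the \emph{natural} module $N$ of $H$ satisfies $N|_{\SL(a)}\cong(\C^a)^{\oplus t}\oplus((\C^a)^*)^{\oplus r}\oplus\C^{\oplus s}$; for $H$ of any type $B$, $C$ or $D$ the module $N$ is self-dual, forcing $t=r$ (\cite[Proposition 2.3]{BZ}), and $t=r\geq 1$ since otherwise the composite would kill $\SL(a)$; hence $(\C^a)^*$ occurs in $\C^b|_{\SL(a)}$, contradicting isotypicality. Self-duality of the natural module, rather than of all modules, is what makes every non-$A$ type fall at once.

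The second gap is the one you call ``the main obstacle'': your key sub-lemma is assumed (``granting it''), yet it is the actual core of the proof, and as stated it is false. Take $f(x)=\diag(\bar x,\ldots,\bar x)$ and $g(y)=\diag(\bar y,\ldots,\bar y)$ with $\bar x:=(x^{T})^{-1}$: the composite is strict diagonal, but neither factor is conjugate to a strict diagonal embedding when $a,c\geq 3$. The correct statement allows a simultaneous twist by the outer automorphism, and the paper proves it by again invoking \cite[Lemma 2.7]{BZ} (both factors are diagonal) and matching with $\C^b|_{\SL(a)}\cong(\C^a)^{\oplus b/a}$ via Krull--Schmidt, which kills the trivial summands and forces either all-standard or all-dual summands; the all-dual case is absorbed by replacing $\sl(W)$ with $\sl(W^*)$ in the identification of $H$. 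Finally, your reduction ``passing to cofinal subsequences changes neither exhaustion up to equivalence'' does not hold under the paper's definition of equivalence, which aligns the two exhaustions index by index from some $n_0$ on; after treating a subsequence one must still identify the intermediate groups $H_m$ for $m_k<m<m_{k+1}$, which the paper does by rerunning the same rigidity argument on $\SL(c_k)\to H_m\to\SL(c_{k+1})$. All of this is repairable with the cited results of \cite{BZ}, but as written the proof's central steps are either asserted or incorrect.
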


\begin{proof}
(a) It suffices to prove the claim at the level of Lie algebras.
Let $\sl(\sn)=\bigcup_n \mathfrak{g}_n$ be an exhaustion by simple Lie algebras, hence of classical type for $n$ large enough. There is a subsequence $\{\mathfrak{g}_{k_n}\}_{n\geq 1}$ and an exhaustion $\{s_n\}_{n\geq 1}$ of $\sn$ such that we have a commutative diagram of embeddings
$$
\xymatrix{\sl(s_n) \ar[r]^{\delta_{s_n,s_{n+1}}} \ar[d]^{\eta_n} & \sl(s_{n+1}) \ar[d]^{\eta_{n+1}} \ar[r] & \cdots \\ \mathfrak{g}_{k_n} \ar[ru]^{\xi_n} \ar[r] & \mathfrak{g}_{k_{n+1}} \ar[ru] \ar[r] & \cdots }
$$
By \cite[Lemma 2.7]{BZ}, the embeddings $\eta_n$ and $\xi_n$ are diagonal, in the sense that there is an isomorphism of  
$\sl(s_n)$-modules
$$W_n\cong V_n^{\oplus t}\oplus {V_n^*}^{\oplus r}\oplus \C^{\oplus s} $$
and an isomorphism of $\mathfrak{g}_{k_n}$-modules
$$
V_{n+1}\cong W_n^{\oplus t'}\oplus {W_n^*}^{\oplus r'}\oplus \C^{\oplus s'} $$
for some triples of nonnegative integers $(t,r,s)$ and $(t',r',s')$,
where
$V_n$ and $W_n$ denote the natural representations of $\sl(s_n)$ and $\mathfrak{g}_{k_n}$,
and $\C$ is a trivial representation.
Also since $\delta_{s_n,s_{n+1}}$ is strict diagonal, we have an isomorphism of  
$\sl(s_n)$-modules
\begin{equation}
\label{Vn+1Vn}
V_{n+1}\cong\underbrace{V_n\oplus\ldots\oplus V_n}_{\mbox{\scriptsize $\frac{s_{n+1}}{s_n}$ copies}}.
\end{equation}

Arguing by contradiction, assume that $\mathfrak{g}_{k_n}$ is not of type A. Then \cite[Proposition 2.3]{BZ} implies that $t=r$. Moreover, $t'+r'>0$ since otherwise $V_{n+1}$ would be a trivial representation of $\sl(s_n)$. Altogether this implies that $V_n^*$ is isomorphic to a direct summand of $V_{n+1}$ considered as an $\sl(s_n)$-module, which is impossible in view of (\ref{Vn+1Vn}). We conclude that $\mathfrak{g}_{k_n}$ is of type A for all $n$.

Moreover, from (\ref{Vn+1Vn}),
we obtain $s=s'=1$ and either $r=r'=0$ or $t=t'=0$. Up to replacing $\mathfrak{g}_{k_n}=\sl(W_n)$ by $\sl(W_n^*)$,
we can assume that $r=r'=0$, and so $\mathfrak{g}_{k_n}\cong \sl(s'_{k_n})$ for some integer such that $s_n|s'_{k_n}$, $s'_{k_n} |s_{n+1}$, and the embedding $\mathfrak{g}_{k_n}\hookrightarrow\mathfrak{g}_{k_{n+1}}$ is induced by $\delta_{s'_{k_n},s'_{k_{n+1}}}$.

If $k:=k_n+1<k_{n+1}$, we get a commutative diagram
$$
\xymatrix{
\sl(s'_{k_n}) \ar[d]^\sim \ar[rr]^{\delta_{s'_{k_n},s'_{k_{n+1}}}} & & \sl(s'_{k_{n+1}}) \ar[d]^\sim  \\
\mathfrak{g}_{k_n} \ar[r] & \mathfrak{g}_k \ar[r] &  \mathfrak{g}_{k_{n+1}}}
$$
where the horizontal arrows are embeddings. Relying as above on  \cite[Proposition 2.3]{BZ},
we get that $\mathfrak{g}_k$ is necessarily of type A, and up to replacing $\mathfrak{g}_k=\sl(W)$ by $\sl(W^*)$, we can assume that $\mathfrak{g}_k\cong \sl(s'_k)$ for some $s'_k$ with $s'_{k_n}|s'_k$, $s'_k|s'_{k_{n+1}}$, and that the embeddings $\mathfrak{g}_{k_n}\hookrightarrow \mathfrak{g}_k\hookrightarrow \mathfrak{g}_{k_{n+1}}$ are induced by  $\delta_{s'_{k_n},s'_k}$ and 
$\delta_{s'_{k},s'_{k_{n+1}}}$.

By iterating the reasoning, we obtain an exhaustion $\{s'_n\}_{n\geq 1}$ of $\sn$ such that the exhaustions
$\sl(\sn)=\bigcup_n\mathfrak{g}_n$ and $\sl(\sn)=\bigcup_n\sl(s'_n)$ are equivalent. This shows (a).

(b) From (a) it follows that for every $n$, the derived group $(G_n,G_n)$ is isomorphic to $\SL(s_n)$ and, after identifying $(G_n,G_n)$ with $\SL(s_n)$ and $(G_{n+1},G_{n+1})$ with $\SL(s_{n+1})$, the map $(G_n,G_n)\hookrightarrow (G_{n+1},G_{n+1})$ becomes the restriction of $\delta_{s_n,s_{n+1}}$. This implies that $G_n$ is either isomorphic to $\SL(s_n)$ or to $\GL(s_n)$. For $n\geq 1$ large enough, $G_n$ has to contain the center $Z(\GL(\sn))$, which is isomorphic to $\mathbb{C}^*$. Since the connected component of the center of $\SL(s_n)$ is trivial, this forces $G_n\cong\GL(s_n)$. Moreover, since $G_n=Z(G_n)(G_n,G_n)$ and the embedding $G_n\hookrightarrow G_{n+1}$ maps $Z(G_n)=Z(\GL(\sn))$ into $Z(G_{n+1})$, we deduce that this embedding $G_n\hookrightarrow G_{n+1}$ coincides with $\delta_{s_n,s_{n+1}}:\GL(s_n)\hookrightarrow \GL(s_{n+1})$
after suitably identifying $G_n$ with $\GL(s_n)$ and $G_{n+1}$ with $\GL(s_{n+1})$..
\end{proof}

The following statement is a corollary of the classification of general diagonal Lie algebras \cite{BZ}. We give a proof for the sake of completeness.

\begin{proposition}
{\rm (a)} If $\sn$ and $\sn'$ are two different infinite supernatural numbers, then the ind-groups $\GL(\sn)$ and $\GL(\sn')$ (resp. $\SL(\sn)$ and $\SL(\sn')$) are not isomorphic.

{\rm (b)} If $\sn$ is an infinite supernatural number, then $\GL(\sn)$ is not isomorphic to $\GL(\infty)$, and $\SL(\sn)$ is not isomorphic to $\SL(\infty)$.
\end{proposition}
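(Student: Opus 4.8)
The plan is to deduce both parts from the classification of exhaustions in Lemma \ref{L2.5-new}, via the following transport principle: an isomorphism of ind-groups carries an exhaustion of the source to an exhaustion of the target by the \emph{same} algebraic groups, and Lemma \ref{L2.5-new} then forces those groups into strict diagonal form, after which the supernatural number can be read off and compared.

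For part (a), I would suppose $\phi\colon\GL(\sn)\xrightarrow{\sim}\GL(\sn')$ is an isomorphism of ind-groups and fix an exhaustion $\{s_n\}$ of $\sn$, so that $\GL(\sn)=\bigcup_n\GL(s_n)$ with the strict diagonal embeddings $\delta_{s_n,s_{n+1}}$. Then $\{\phi(\GL(s_n))\}_n$ is an exhaustion of $\GL(\sn')$ whose terms are classical groups, each isomorphic to $\GL(s_n)$. By Lemma \ref{L2.5-new}(b) this exhaustion is equivalent to $\{\GL(s'_n),\delta_{s'_n,s'_{n+1}}\}_n$ for some exhaustion $\{s'_n\}$ of $\sn'$. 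By the definition of equivalence there is an $n_0$ with $\phi(\GL(s_n))\cong\GL(s'_n)$ as algebraic groups for all $n\geq n_0$; since a dimension count gives $\GL(k)\cong\GL(\ell)\Rightarrow k=\ell$, I obtain $s_n=s'_n$ for $n\geq n_0$. A tail of an exhaustion is again an exhaustion, and the supernatural number it defines is the supremum of its terms in the divisibility order; hence $\sn=\sup_n s_n=\sup_n s'_n=\sn'$, contradicting $\sn\neq\sn'$. The argument for $\SL(\sn)\cong\SL(\sn')$ is identical, using that an exhaustion by the almost simple, simply connected groups $\SL(s_n)$ falls under Lemma \ref{L2.5-new}(a).

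For part (b), I would suppose $\phi\colon\GL(\sn)\xrightarrow{\sim}\GL(\infty)$ and transport the standard exhaustion $\GL(\infty)=\bigcup_n\GL(n)$ back to an exhaustion $\{\phi^{-1}(\GL(n))\}_n$ of $\GL(\sn)$ by the classical groups $\phi^{-1}(\GL(n))\cong\GL(n)$. By Lemma \ref{L2.5-new}(b) this is equivalent to a strict diagonal exhaustion $\{\GL(s_n),\delta_{s_n,s_{n+1}}\}_n$ with $\{s_n\}$ an exhaustion of $\sn$; as above, equivalence yields $\GL(n)\cong\GL(s_n)$, hence $s_n=n$, for all large $n$. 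But then the exhaustion condition $s_n\mid s_{n+1}$ becomes $n\mid(n+1)$, which is impossible for $n\geq 2$. This contradiction gives $\GL(\sn)\not\cong\GL(\infty)$, and the same reasoning with Lemma \ref{L2.5-new}(a) and $\SL(\infty)=\bigcup_n\SL(n)$ gives $\SL(\sn)\not\cong\SL(\infty)$. The point is precisely that the standard exhaustion of $\GL(\infty)$ violates the divisibility constraint intrinsic to every exhaustion of a supernatural number.

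The only delicate point—the main obstacle—is the transport principle itself: one must check that the $\phi$-image of each finite-dimensional stage is a closed algebraic subgroup contained in some stage of the target, so that $\{\phi(\GL(s_n))\}_n$ (respectively $\{\phi^{-1}(\GL(n))\}_n$) is a genuine exhaustion by classical groups to which Lemma \ref{L2.5-new} applies. This rests on the standard fact that a morphism from a finite-dimensional variety into an ind-variety factors through a finite stage; once it is secured, Lemma \ref{L2.5-new} supplies all the structure and the contradictions in (a) and (b) are immediate.
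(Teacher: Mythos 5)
Your proof is correct and follows essentially the same route as the paper: transport an exhaustion through the hypothetical isomorphism, invoke Lemma \ref{L2.5-new} to force it into strict diagonal form, and then compare the supernatural numbers read off from the exhaustions (part (a)) or derive the divisibility contradiction $n \mid n+1$ (part (b)). The only organizational difference is that the paper first reduces the $\GL$ statement to $\SL$ via derived subgroups and uses Lemma \ref{L2.5-new}(a) throughout, whereas you treat $\GL$ directly with Lemma \ref{L2.5-new}(b); your write-up simply makes explicit the details the paper compresses into ``Lemma \ref{L2.5-new} implies'' and ``clearly''.
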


\begin{proof}
(a) Since $\SL(\cdot)$ is the derived group of $\GL(\cdot)$, it suffices to establish the claim concerning $\SL(\sn)$ and $\SL(\sn')$. 
Assume there is an isomorphism of ind-groups $\varphi:\SL(\sn')\to \SL(\sn)$. Then any exhaustion $\{s'_n\}$ of $\sn'$ yields an exhaustion $\SL(\sn)=\bigcup_n \varphi(\SL(s'_n))$ of the group $\SL(\sn)$, and Lemma \ref{L2.5-new} implies $\sn=\sn'$, a contradiction.

(b) By definition, $\SL(\infty)$ has an exhaustion by the groups $\SL(n)$ ($n\geq 1$) via the standard embeddings $\SL(n)\to\SL(n+1)$, $x\mapsto\begin{pmatrix}x&0\\0&1\end{pmatrix}$. Clearly this exhaustion is not equivalent to $\{\SL(s_n),\delta_{s_n,s_{n+1}}\}_{n\geq 1}$ for any exhaustion $\{s_n\}_{n\geq 1}$ of $\sn$. Therefore, the ind-groups $\SL(\sn)$ and $\SL(\infty)$ are not isomorphic by Lemma \ref{L2.5-new}\,(a). The same argument shows that $\GL(\sn)$ and $\GL(\infty)$ are not isomorphic.
\end{proof}

\subsection{Parabolic and Borel subgroups}
\label{section-2.3}
An ind-subgroup $\mathbf{H}\subset\GL(\sn)$ is said to be a {\it (locally splitting) Cartan subgroup} if
there is an exhaustion $\GL(\sn)=\bigcup_n G_n$ by classical groups
such that $G_n\cap\mathbf{H}$ is a Cartan subgroup of $G_n$ for all $n$.
For instance, the subgroup of invertible periodic diagonal matrices 
in the realization (\ref{matrix-interpretation})  is a Cartan subgroup of $\GL(\sn)$.


If $\mathbf{P}$ is an ind-subgroup of $\GL(\sn)$, 
 then the quotient $\GL(\sn)/\mathbf{P}$ is an ind-variety obtained as the direct limit of the quotients $\GL(s)/\mathbf{P}(s)$ for $s\in\mathcal{D}(\sn)$. 

For the purposes of this paper, we say that an ind-subgroup $\mathbf{P}\subset\GL(\sn)$ 
is a \textit{parabolic subgroup}
if there exists an exhaustion $\GL(\sn)=\bigcup_n G_n$ by classical groups such that
$G_n\cap \mathbf{P}$ is a parabolic subgroup of $G_n$ for all $n$ (cf. \cite{DP1}).
This implies in particular that the ind-variety $\GL(\sn)/\mathbf{P}$ is \textit{locally projective} as it has an exhaustion
\[\GL(\sn)/\mathbf{P}=\bigcup_n G_n/(G_n\cap \mathbf{P})\]
by projective varieties.
If, in addition,
the unipotent radical of $G_n\cap \mathbf{P}$ is contained in the unipotent radical of $G_{n+1}\cap \mathbf{P}$ for every $n$, then we say that $\mathbf{P}$ is a {\em strong parabolic subgroup}.

An ind-subgroup $\mathbf{B}\subset\GL(\sn)$ 
is said to be a \textit{Borel subgroup} if it is locally solvable 
and parabolic. This means equivalently that there is an exhaustion $\GL(\sn)=\bigcup_n G_n$ as above for which $G_n\cap \mathbf{B}$ is a Borel subgroup of $G_n$ for all $n$. Note that a Borel subgroup is necessarily a strong parabolic subgroup.

\begin{lemma}
A subgroup $\mathbf{G}'$ of $\GL(\sn)$ is a Cartan (respectively, parabolic or Borel) subgroup of $\mathbf{G}$
if and only if there is an exhaustion $\{s_n\}_{n\geq 1}$ of $\sn$ such that 
for every $n$ the intersection $\mathbf{G}'\cap\GL(s_n)$ is a Cartan (respectively, parabolic or Borel) subgroup of $\GL(s_n)$.
\end{lemma}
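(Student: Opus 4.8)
The plan is to prove the two implications separately, with the backward (``if'') direction being essentially formal and the forward (``only if'') direction carrying all the content. For the backward direction, if $\{s_n\}$ is an exhaustion of $\sn$ for which every intersection $\mathbf{G}'\cap\GL(s_n)$ is of the required type, then by Lemma \ref{L2.1} (see also Remark \ref{R2.4}) the groups $\GL(s_n)$ together with the strict diagonal embeddings $\delta_{s_n,s_{n+1}}$ form an exhaustion of $\GL(\sn)$ by classical groups of exactly the kind demanded in the definitions of Section \ref{section-2.3}; hence $\mathbf{G}'$ is a Cartan (resp. parabolic, resp. Borel) subgroup, the Borel case using in addition that solvability of all the $\mathbf{G}'\cap\GL(s_n)$ makes $\mathbf{G}'$ locally solvable. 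For the forward direction I would first dispose of the parabolic case by reducing it to the Borel case, so that only the Cartan and Borel cases need genuine work.

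For this reduction, recall that in a connected reductive group a closed subgroup is parabolic if and only if it contains a Borel subgroup. Starting from a classical exhaustion $\GL(\sn)=\bigcup_nG_n$ with each $P_n:=G_n\cap\mathbf{G}'$ parabolic in $G_n$, I would build a nested family of Borel subgroups $B_n\subseteq P_n$: given $B_n$, the group $B_n$ is connected solvable inside the parabolic $P_{n+1}\supseteq P_n\supseteq B_n$, hence lies in a Borel subgroup $B_{n+1}$ of $G_{n+1}$ contained in $P_{n+1}$. Then $\mathbf{B}:=\bigcup_nB_n$ is a locally splitting Borel subgroup of $\GL(\sn)$ contained in $\mathbf{G}'$. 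Granting the Borel case, there is an exhaustion $\{s_n\}$ of $\sn$ with $\mathbf{B}\cap\GL(s_n)$ a Borel subgroup of $\GL(s_n)$ for all $n$; since $\mathbf{G}'\cap\GL(s_n)\supseteq\mathbf{B}\cap\GL(s_n)$ is a closed subgroup of $\GL(s_n)$ containing a Borel subgroup, it is automatically parabolic, which is the assertion for $\mathbf{G}'$.

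It then remains to treat the Cartan and Borel cases, which run in parallel; I describe the Borel case. Given a witnessing classical exhaustion $\{G_n\}$ with $G_n\cap\mathbf{B}$ a Borel subgroup of $G_n$, Lemma \ref{L2.5-new}(b) provides an exhaustion $\{s_n\}$ of $\sn$ to which $\{G_n\}$ is equivalent. Tracing that construction inside $\GL(\sn)$ yields, after possibly post-composing with the duality involution $x\mapsto(x^{\mathrm t})^{-1}$ (which preserves the standard copies $\GL(s_n)$ and carries Borel subgroups to Borel subgroups), a genuine interlacing of subgroups $\GL(s_n)\subseteq G_{k_n}\subseteq\GL(s_{n+1})$ of $\GL(\sn)$. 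Consequently $\GL(s_n)\cap\mathbf{B}$ is a solvable subgroup of $\GL(s_n)$ containing the Borel subgroup $G_{k_{n-1}}\cap\mathbf{B}$ of $G_{k_{n-1}}\subseteq\GL(s_n)$ and contained in the Borel subgroup $G_{k_n}\cap\mathbf{B}$ of $G_{k_n}$; the goal is to show that it is in fact a Borel subgroup of $\GL(s_n)$ for cofinally many $n$.

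This last step is the crux, and the one I expect to resist a soft argument. It cannot be settled by purely formal sandwiching, since the intersection of a parabolic (or Borel) subgroup of an overgroup with an intermediate classical subgroup need not be parabolic: already for the strict diagonal chain $\GL(2)\subseteq\GL(4)\subseteq\GL(8)$ the stabilizer of a line in general position meets the middle group in a subgroup of dimension strictly below that of a Borel subgroup, even though it meets $\GL(2)$ and $\GL(8)$ in parabolic subgroups. What should rescue the statement is that $\mathbf{B}$ is a genuine locally splitting Borel subgroup rather than an ad hoc solvable subgroup: in the realization (\ref{matrix-interpretation}) such a $\mathbf{B}$ corresponds to a flag compatible with the periodic block structure at cofinally many divisors, so that the solvable group $\GL(s_n)\cap\mathbf{B}$ cannot drop below full Borel dimension for cofinally many $n$. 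Making this maximality precise -- ruling out, by a dimension count along the interlacing, that $\GL(s_n)\cap\mathbf{B}$ is strictly smaller than a Borel subgroup for all large $n$ -- is the heart of the proof; the Cartan case is identical with ``maximal torus'' in place of ``Borel subgroup'' throughout.
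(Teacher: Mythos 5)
Your backward direction is fine, and your reduction of the parabolic case to the Borel case is correct and even elegant: nested Borel subgroups $B_n\subseteq P_n$ exist because a Borel subgroup of the parabolic group $P_{n+1}$ is a Borel subgroup of $G_{n+1}$, one checks $G_n\cap\mathbf{B}=B_n$ via $N_{G_n}(B_n)=B_n$, and any subgroup of $\GL(s_n)$ containing a Borel subgroup is automatically closed and parabolic. The problem is that after this reduction nothing is actually proved: for the Borel and Cartan cases, which now carry all the content, you formulate the key assertion ("$\GL(s_n)\cap\mathbf{B}$ is a Borel subgroup of $\GL(s_n)$ for cofinally many $n$") and then explicitly defer it, calling it "the heart of the proof." No dimension count is carried out, and no mechanism is given that would force the needed maximality. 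This is a genuine gap, not a routine verification: as you yourself observe, intersecting with an intermediate diagonal copy does not preserve parabolicity (compare Example \ref{E2.5} and Lemma \ref{L5.1}), and in fact it is not even clear that your assertion is true for the particular interlacing exhaustion $\{s_n\}$ you fix — the subgroups $G_{k_n}$ are in general only conjugates of standard copies by elements centralizing $\GL(s_n)$, and a Borel subgroup adapted to such twisted copies can fail on the sandwiched standard copies; the lemma only asserts that \emph{some} exhaustion of $\sn$ works, so fixing the exhaustion before proving the crux already concedes too much. (A small factual slip: a line in general position in $\mathbb{C}^8$ meets the diagonal $\GL(2)$ in the center $\mathbb{C}^*$, which is not parabolic; to illustrate the failure of sandwiching you need a line such as $\langle(e_1,0,0,e_1)\rangle$, which meets $\GL(2)$ and $\GL(8)$ parabolically but meets $\GL(4)$ in a group of dimension below that of a Borel subgroup.)

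The paper's proof runs along an entirely different and much shorter line, which bypasses your crux. A Cartan (respectively parabolic, Borel) subgroup is \emph{defined} by the existence of some exhaustion $\GL(\sn)=\bigcup_nG_n$ by classical groups whose intersections with $\mathbf{G}'$ are of the required type; the lemma is then deduced by applying Lemma \ref{L2.5-new} to that witnessing exhaustion: any exhaustion by classical groups is equivalent to a standard one $\{\GL(s_n),\delta_{s_n,s_{n+1}}\}$, and it is the equivalence itself — the isomorphisms $G_n\stackrel{\sim}{\to}\GL(s_n)$ commuting with all embeddings — that transports the condition on the intersections from the groups $G_n$ to the groups $\GL(s_n)$, uniformly in the Cartan, parabolic and Borel cases, with no case distinction and no sandwiching argument. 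Your plan retains from Lemma \ref{L2.5-new} only the interlacing of subgroups $\GL(s_n)\subseteq G_{k_n}\subseteq\GL(s_{n+1})$ and throws away the identifications; that is precisely why you are left with an unproven maximality statement. To complete a proof you should either genuinely establish that statement (including handling the twists by centralizing elements), or revert to using the equivalence of exhaustions as the paper does.
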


\begin{proof}
This follows from Lemma \ref{L2.5-new}.
\end{proof}



The following example shows that for a given parabolic subgroup $\mathbf{P}\subset\GL(\sn)$, the property that the group $G_n\cap\mathbf{P}$ is a parabolic subgroup of $G_n$ 
may no longer hold for a refinement of the exhaustion used to define $\mathbf{P}$.

\begin{example}
\label{E2.5}
Let $\sn=2^\infty$, $s_n=2^{2n-2}$, and $s'_n=2^{n-1}$. 
Then both $\{s_n\}_{n\geq 1}$ and $\{s'_n\}_{n\geq 1}$ are exhaustions of $\sn$, and $\{s'_n\}_{n\geq 1}$ is a refinement of $\{s_n\}_{n\geq 1}$. Let $H_n\subset \GL(s_n)$ be the subgroup of diagonal matrices.
We define a Borel subgroup $B_n\subset\GL(s_n)$ that contains $H_n$, by induction in the following way:
$B_1:=\GL(1)$, and
\[B_{n+1}:=\left(\begin{array}{cccc}
B_n & * & * & * \\ 0 & B_n & * & * \\ 0 & 0 & B_n & 0 \\ 0 & 0 & * & B_n \end{array}\right)\]
 for  $n\geq 2$, where all the blocks are square matrices of size $s_n$.
Then $B_{n+1}\cap\GL(s_n)=B_n$ for all $n$, which implies that
$\mathbf{B}=\bigcup_{n\geq 1} B_n$
is a well-defined Borel subgroup of $\GL(\sn)$ arising from the exhaustion $\{s_n\}_{n\geq 1}$ of $\sn$. However, for all $n$,
\[\mathbf{B}\cap\GL(s'_{2n})=\begin{pmatrix} B_n & 0 \\ 0 & B_n\end{pmatrix}\]
is not a Borel subgroup (nor a parabolic subgroup) of $\GL(s'_{2n})$.
\hfill $\blacksquare$

\end{example}





\section{On embeddings of flag varieties}
\label{section-3}

In this section we review some preliminaries
on finite-dimensional (partial) flag varieties.
In particular, we recall the notions of linear embedding and standard extension  introduced in \cite{PT}.

\subsection{Grassmannians and (partial) flag varieties}

\label{section-3.1}

Let $V$ be a finite-dimensional vector space. For an integer $0\leq p\leq \dim V$, we denote by $\Grass(p;V)$ the grassmannian of $p$-dimensional subspaces in $V$. This grassmannian can be realized as a projective variety by the Pl\"ucker embedding $\Grass(p;V)\hookrightarrow \mathbb{P}(\bigwedge^p V)$. Moreover, the Picard group $\Pic(\Grass(p;V))$ is isomorphic to $\mathbb{Z}$ with generator $\mathcal{O}_{\Grass(p;V)}(1)$,
the pull-back of the line bundle $\mathcal{O}(1)$ on $\mathbb{P}(\bigwedge^p V)$. 

For a sequence of integers $0<p_1<\ldots<p_{k-1}<p_k<\dim V$, we denote by $\Flags(p_1,\ldots,p_k;V)$ the variety of (partial) flags
\[
\Flags(p_1,\ldots,p_k;V)=\{(V_1,\ldots,V_k)\in\Grass(p_1;V)\times\cdots\times\Grass(p_k;V):V_1\subset\ldots\subset V_k\}.
\]
We have 
\[\Pic(\Flags(p_1,\ldots,p_k;V))\cong \mathbb{Z}^{k}.\]
If we let $L_i$ be the pull-back 
\[L_i=\mathrm{proj}_i^*\mathcal{O}_{\Grass(p_i;V)}(1)\]
along the projection 
\[\mathrm{proj}_i:\Flags(p_1,\ldots,p_k;V)\to \Grass(p_i;V)\]
(for $i=1,\ldots,k$),
then $[L_1],\ldots,[L_{k}]$ is a set of generators of the Picard group.

By \textit{embedding of flag varieties} we mean a closed immersion
\[\varphi:X=\Flags(p_1,\ldots,p_k;V)\hookrightarrow Y=\Flags(q_1,\ldots,q_\ell;W).\]
If $\F=\{F_1,\ldots,F_k\}\in X$ is a variable point, we set
$$C_i(\varphi)=\bigcap_{\mathcal{F}\in X}\varphi(\F)_i.$$
Then $C_1(\varphi)\subset\ldots\subset C_\ell(\varphi)$ is a chain of subspaces of $W$ with possible repetitions.
We define the {\em support} of $\varphi$ to be the set of indices $i\in\{1,\ldots,\ell\}$ such that
$\dim C_i(\varphi)<q_i$.

\subsection{Linear embedding}

\label{section-3.3}

Let 
\[Q:=\Grass(q_1;W_1)\times\cdots\times\Grass(q_{\ell};W_{\ell})\]
where
$W_1,\ldots,W_{\ell}$ is a sequence of vector spaces and $0<q_j<\dim W_j$ for all $j$. Consider an embedding
\[\psi:X=\Flags(p_1,\ldots,p_k;V)\to Q.\]
We use the notation of the previous section for $X$.
The Picard group of $Q$ is isomorphic to $\mathbb{Z}^{\ell}$, with generators associated to the line bundles $M_j=\mathrm{proj}_j^*\mathcal{O}_{\Grass(q_j;W_j)}(1)$.

\begin{definition}
We say that the embedding $\psi$ is {\em linear} if 
we have
\[[\psi^*M_j]=0\quad\mbox{or}\quad[\psi^*M_j]\in\{[L_1],\ldots,[L_{k}]\}\]
for all $j\in\{1,\ldots,\ell\}$.
\hfill $\blacksquare$
\end{definition}


Let 
\[\varphi:X=\Flags(p_1,\ldots,p_k;V)\hookrightarrow Y=\Flags(q_1,\ldots,q_\ell;W)\]
be an embedding of flag varieties.
The following definition is equivalent to \cite[Definition 2.1]{PT}.

\begin{definition}
The embedding $\varphi$ is said to be {\em linear} if the composed embedding
$\psi=\pi\circ\varphi$ is linear, where $\pi:=\prod_{j=1}^\ell\mathrm{proj}_j:Y\to \prod_{j=1}^{\ell}\Grass(q_j;W)$.
\hfill $\blacksquare$
\end{definition}

\subsection{Standard extension}

\begin{definition}[\cite{PT}] \label{def-SE} 
(a)
The embedding $\varphi:\Flags(p_1,\ldots,p_k;V)\hookrightarrow\Flags(q_1,\ldots,q_\ell;W)$ is said to be a {\em strict standard extension} if there are 
\begin{itemize}
\item a decomposition of vector spaces $W=V'\oplus Z$ with a linear isomorphism $\varepsilon:V\stackrel{\sim}{\to}V'$,
\item a chain of subspaces
$Z_1\subset \ldots\subset Z_\ell$ of $Z$ (with possible repetitions),
\item a nondecreasing map $\kappa:\{1,\ldots,\ell\}\to\{0,1,\ldots,k,k+1\}$,
\end{itemize}
such that
\begin{equation}
\label{formula:standard-extension}
\varphi\big(\{V_1,\ldots, V_k\}\big)
=\{\varepsilon( V_{\kappa(1)})+Z_1,\ldots, \varepsilon(V_{\kappa(\ell-1)})+Z_{\ell-1}
, \varepsilon(V_{\kappa(\ell)})+Z_\ell\}
\end{equation}
where $V_0:=0$ and $V_{k+1}:=V$.

(b) More generally, we say that $\varphi$ is a \textit{standard extension} if $\varphi$ itself is a strict standard extension or its composition of $\varphi$ with the duality map $\Flags(q_1,\ldots,q_\ell;W)\to \Flags(\dim W-q_1,\ldots,\dim W-q_\ell;W^*)$
is a strict standard extension.
\hfill $\blacksquare$
\end{definition}

\begin{remark}
Since the map $\varphi$ of (\ref{formula:standard-extension}) is an embedding of flag varieties, the following conditions must hold: $1,\ldots,k$ have preimages by $\kappa$, and the map $j\in\{1,\ldots,\ell\}\mapsto(\kappa(j),Z_j)$ is injective and does not contain $(0,0)$ nor $(k+1,Z)$ in its image.
\hfill $\blacksquare$
\end{remark}

Note that, if $\varphi$ is a strict standard extension, then $C_i(\varphi)=Z_i$ for all $i\in\{1,\ldots,\ell\}$,
and the support of $\varphi$ is the interval\ $\kappa^{-1}([1,k])$. 

Also, a composition of standard extensions is a standard extension.

\begin{example}
\label{E3.5}
Let $W=V\oplus Z$, where $\dim Z=d$. For $1\leq k_0\leq k+1$, we consider the embeddings
\begin{eqnarray*}
\varphi:\mathrm{Fl}(p_1,\ldots,p_k;V) & \hookrightarrow &  \Flags(q_1,\ldots,q_k;W) \\
\{V_1,\ldots,V_k\} & \mapsto & \{V_1,\ldots,V_{k_0-1}, V_{k_0}+Z,\ldots,V_k+Z\}
\end{eqnarray*}
and
\begin{eqnarray*}
\bar\varphi:\mathrm{Fl}(p_1,\ldots,p_k;V) & \hookrightarrow &  \Flags(\bar{q}_1,\ldots,\bar{q}_{k+1};W) \\
\{V_1,\ldots,V_k\} & \mapsto & \{V_1,\ldots,V_{k_0-1}, V_{k_0-1}+Z,V_{k_0}+Z,\ldots,V_k+Z\}
\end{eqnarray*}
where 
$$q_i=\left\{\begin{array}{ll}
p_i & \mbox{if $1\leq i<k_0$,} \\ p_i+d & \mbox{if $k_0\leq i\leq k$}
\end{array}\right.\quad\mbox{and}\quad \bar{q}_i=\left\{
\begin{array}{ll}
p_i & \mbox{if $1\leq i<k_0$,} \\ p_{i-1}+d & \mbox{if $k_0\leq i\leq k+1$.}
\end{array}
\right.$$
Here, we still use the convention that $V_0:=0$ and $V_{k+1}:=V$,
and we set accordingly $p_0:=0$ and $p_{k+1}:=\dim V$.
Then $\varphi$ and $\bar\varphi$ are strict standard extensions, associated with the respective chains of subspaces
$$
\underbrace{0\subset\ldots\subset 0}_{\mbox{\scriptsize $k_0-1$ times}}\subset \underbrace{Z\subset\ldots\subset Z}_{\mbox{\scriptsize $k+1-k_0$ times}}
\quad\mbox{and}\quad
\underbrace{0\subset\ldots\subset 0}_{\mbox{\scriptsize $k_0-1$ times}}\subset \underbrace{Z\subset\ldots\subset Z}_{\mbox{\scriptsize $k+2-k_0$ times}}
$$
and respective maps $\kappa$ and $\bar{\kappa}$, where $\kappa(i)=i$ for all $i$, $\bar\kappa(i)=i$ for $i\leq k_0-1$, $\bar\kappa(i)=i-1$ for $i\geq k_0$.\hfill $\blacksquare$
\end{example}

\begin{remark}
Every strict standard extension is the composition of, possibly several, maps $\varphi$ and $\bar\varphi$ as in Example \ref{E3.5}.
\hfill$\blacksquare$
\end{remark}

\section{A review of generalized flags}
\label{section-4}

\subsection{Generalized flags}

\label{section-4.1}

Let $V$ be an infinite-dimensional vector space of countable dimension and let $E=\{e_1,e_2,\ldots\}$ be a basis of $V$. By $\langle S\rangle$, we denote the span of vectors in a subset $S\subset V$. Following \cite{Dimitrov-Penkov}, we call {\em generalized flag} a collection $\mathcal{F}$ of subspaces of $V$ that satisfies the following conditions:
\begin{itemize}
\item $\mathcal{F}$ is totally ordered by inclusion;
\item every subspace $F\in \mathcal{F}$ has an immediate predecessor or an immediate successor in $\mathcal{F}$;
\item $V\setminus\{0\}=\bigcup_{(F',F'')}(F''\setminus F')$, where the union is over pairs of consecutive subspaces in~$\mathcal{F}$.
\end{itemize}
Moreover, a generalized flag $\mathcal{F}$ is said to be \textit{$E$-compatible} if every subspace $F\in\mathcal{F}$ is spanned by elements of $E$. An $E$-compatible generalized flag $\F$ can be encoded by 
a (not order preserving) surjective map $\sigma:\mathbb{Z}_{>0}\to A$ onto a totally ordered set $(A,\leq)$ such that $\mathcal{F}=\{F'_a,F''_a\}_{a\in A}$ where $F'_a=\langle e_k:\sigma(k)<a\rangle$ and $F''_a=\langle e_k:\sigma(j)\leq a\rangle$.
More generally, a generalized flag $\F$ is said to be
\textit{weakly $E$-compatible} if it is $E'$-compatible for some basis  $E'$ of $V$ differing from $E$ in finitely many vectors.



Let $$\GL(E)=\{g\in \mathrm{GL}(V):g(e_k)=e_k\ \mbox{for all but finitely many $k$}\}.$$ 
Then $\GL(E)$ is an ind-group, isomorphic to the finitary classical ind-group $\GL(\infty)$.
The group $\GL(E)$ acts on the set of all weakly $E$-compatible generalized flags. Furthermore, it is established in \cite{Dimitrov-Penkov} that weakly $E$-compatible generalized flags $\mathcal{F}$ of $V$ are in one-to-one correspondence with splitting parabolic subgroups $\mathbf{P}\subset\GL(E)$. More precisely, the map
\[\mathcal{F}\mapsto \mathbf{P}=\mathrm{Stab}_{\GL(E)}(\mathcal{F})\]
is a bijection between these two sets.

By a \textit{natural representation} of $\GL(\sn)$ we mean a direct limit of natural representations of $\GL(s)$ for $s\in\mathcal{D}(\sn)$. Two natural representations do not have to be isomorphic; see \cite{HS}.

Assume now that $V$ is a natural representation for $\GL(\sn)$, $\mathbf{H}\subset\GL(\sn)$ is a Cartan subgroup such that there is a basis $E$ of $V$ consisting of eigenvectors of $\mathbf{H}$.
The group $\GL(\sn)$ acts in a natural way on the generalized flags in $V$, and a generalized flag is $E$-compatible if and only if it is $\mathbf{H}$-stable.
However,  generalized flags are less suited for describing parabolic subgroups of $\GL(\sn)$ than for describing parabolic subgroups of $\GL(\infty)\cong\GL(E)$, since the stabilizer of a generalized flag in $\GL(\sn)$ is not always a parabolic subgroup. Moreover, there are parabolic subgroups of $\GL(\sn)$ which cannot be realized as stabilizers of generalized flags in a prescribed natural representation. These observations are illustrated by the following two examples.

\begin{example}
For every $n\geq 0$, we define inductively a subset $I_n\subset\{1,\ldots,2^{n+1}\}$ by setting
\[I_0:=\{1\}\subset\{1,2\},\qquad I_n:=I_{n-1}\cup\{2^n+i:i\in\{1,\ldots,2^n\}\setminus I_{n-1}\}\ \ \mbox{for $n\geq 1$}.\]
Note that $\{I_n\}_{n\geq 0}$ is a nested sequence of sets, and let $I:=\bigcup_{n\geq 0}I_n$. 
For $V=\langle e_1,e_2,\ldots\rangle$ as above, put
\[W:=\langle e_i:i\in I\rangle.\]
Thus $\mathcal{F}:=\{0\subset W\subset V\}$ is a generalized flag. 

By Lemma \ref{L2.5-new}\,(b), any exhaustion of $\GL(2^\infty)$ by classical groups is equivalent to $\{\GL(s_n),\delta_{s_n,s_{n+1}}\}_{n\geq 1}$ for an exhaustion $\{s_n=2^{k_n}\}_{n\geq 1}$ of $\sn$. Every element
$g\in\GL(2^{k_n})$  stabilizing $\mathcal{F}$ should be such that the blockwise diagonal matrix
\[\begin{pmatrix} g & 0 \\ 0 & g\end{pmatrix}\]
stabilizes $\langle e_i:i\in I_{k_n}\rangle=\langle e_i:i\in I_{k_n-1}\rangle\oplus\langle e_{2^{k_n-1}+i}:i\in\{1,\ldots,2^{k_n}\}\setminus I_{k_n-1}\rangle$, hence $g$ should stabilize both subspaces $\langle e_i:i\in I_{k_n-1}\rangle$ and $\langle e_i:i\in\{1,\ldots,2^{k_n}\}\setminus I_{k_n-1}\rangle$. This implies that the stabilizer of $W$ in $\GL(2^{k_n})$ is not a parabolic subgroup, for all $n\geq 1$. Therefore, $\mathrm{Stab}_{\GL(2^\infty)}(\mathcal{F})$ is not a parabolic subgroup of $\GL(2^\infty)$.
\hfill $\blacksquare$
\end{example}

\begin{example}
(a) Let $V=\bigcup_n\mathbb{C}^{2^n}$ be seen as a natural representation of $\GL(2^\infty)$ where the embedding $\mathbb{C}^{2^n}\cong\mathbb{C}^{2^n}\times\{0\}^{2^n}\subset \mathbb{C}^{2^{n+1}}$ is considered.
For $n\geq 1$, let $P_n\subset\GL(2^n)$ be the stabilizer of $L_n:=\{0\}^{2^{n}-1}\times\mathbb{C}$, the line spanned by the $2^{n}$-th vector of the standard basis of $\mathbb{C}^{2^n}$.
Then $P_{n+1}\cap\GL(2^n)=P_n$ for all $n\geq 1$, hence $\mathbf{P}:=\bigcup_{n\geq 1}P_n$ is a parabolic subgroup of $\GL(2^\infty)$. However, $\mathbf{P}$ acts transitively on the nonzero vectors of $V$, so that there is no nonzero proper subspace of $V$ which is stable by $\mathbf{P}$. Therefore, $\mathbf{P}$ cannot be realized as the stabilizer of a generalized flag in $V$.

(b) If in part (a) we replace the embeddings defining the structure of natural representation on $V$ by $\mathbb{C}^{2^n}\cong\{0\}^{2^n}\times\mathbb{C}^{2^n}\subset\mathbb{C}^{2^{n+1}}$, then $L_n=L_1$ for all $n\geq 1$ and the parabolic subgroup $\mathbf{P}$ of (a) becomes the stabilizer of the generalized flag $\{0\subset L_1\subset V\}$.
\hfill $\blacksquare$
\end{example}


\subsection{Ind-varieties of generalized flags} 

\label{section-4.2}

\begin{definition}
(a) Two generalized flags $\mathcal{F}$ and $\mathcal{G}$ are said to be \textit{$E$-commensurable} \cite{Dimitrov-Penkov} if $\mathcal{F}$ and $\mathcal{G}$ are weakly $E$-compatible and
there is an isomorphism of totally ordered sets $\phi:\mathcal{F}\to\mathcal{G}$ and there is a finite-dimensional subspace $U\subset V$ such that, for all $F\in\mathcal{F}$, $F+U=\phi(F)+U$ and $\dim F\cap U=\dim \phi(F)\cap U$.

(b) Given an $E$-compatible generalized flag $\mathcal{F}$, we define $\Flags(\mathcal{F},E)$ as the set of all generalized flags which are $E$-commensurable with $\mathcal{F}$.
\hfill $\blacksquare$
\end{definition}

Let $\mathcal{F}$ be an $E$-compatible generalized flag.
We now recall the ind-variety structure on
$\mathrm{Fl}(\mathcal{F},E)$ \cite{Dimitrov-Penkov}. 
To do this, we write $E=\{e_k\}_{k\geq 1}$ and, for $n\geq 1$, set $V_n:=\langle e_1,\ldots,e_n\rangle$. 
The collection of subspaces $\{F\cap V_n:F\in\mathcal{F}\}$ determines a  flag $F^{(n)}_1\subset\ldots\subset F^{(n)}_{p_n-1}$ in $F_{p_n}^{(n)}:=V_n$;
furthermore we set $d_i^{(n)}:=\dim F^{(n)}_i$ and 
$$X_n:=\Flags(d_1^{(n)},\ldots,d_{p_n-1}^{(n)};V_n).$$
We define an embedding $\eta_n:X_n\to X_{n+1}$ in the following way.
Let $i_0\in\{1,\ldots,p_{n+1}\}$ be minimal such that $e_{n+1}\in F^{(n+1)}_{i_0}$.
We have either $p_{n+1}=p_n$ or $p_{n+1}=p_n+1$. In the former case we set
$$
\eta_n:\{M_1,\ldots,M_{p_n-1}\}\mapsto \{M_1,\ldots,M_{i_0-1},M_{i_0}\oplus\langle e_{n+1}\rangle,\ldots,M_{p_n-1}\oplus\langle e_{n+1}\rangle\}.
$$
In the latter case, we define
$$
\eta_n:\{M_1,\ldots,M_{p_n-1}\}\mapsto \{M_1,\ldots,M_{i_0-1},M_{i_0-1}\oplus\langle e_{n+1}\rangle,\ldots,M_{p_n-1}\oplus\langle e_{n+1}\rangle\}.
$$

\begin{proposition}[\cite{Dimitrov-Penkov}]
{\rm (a)} The maps $\{\eta_n\}_{n\geq 1}$ are strict standard extensions and they yield an exhaustion $\Flags(\mathcal{F},E)=\bigcup_{n\geq 1}X_n$. This endows $\Flags(\mathcal{F},E)$ with a structure of locally projective ind-variety.

{\rm (b)} If $\mathbf{P}=\mathrm{Stab}_{\GL(E)}(\mathcal{F})$, then there is a natural isomorphism of ind-varieties $\GL(E)/\mathbf{P}\stackrel{\sim}{\to} \Flags(\mathcal{F},E)$.
\end{proposition}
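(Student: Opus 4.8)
The plan is to treat the two parts in turn, reducing part~(a) to the analysis of Example~\ref{E3.5} and part~(b) to a transitivity statement, with the $E$-commensurability condition serving as the bridge between the finite and the infinite pictures. For part~(a), I would first verify that each $\eta_n$ is a strict standard extension by matching it directly against Definition~\ref{def-SE} with the one-dimensional complement $Z=\langle e_{n+1}\rangle$ and $W=V_{n+1}=V_n\oplus Z$. In the case $p_{n+1}=p_n$ the prescription amounts to taking $Z_i=0$ for $i<i_0$ and $Z_i=Z$ for $i\geq i_0$, together with $\kappa(i)=i$; this is precisely the extension $\varphi$ of Example~\ref{E3.5} with $d=1$ and $k_0=i_0$. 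In the case $p_{n+1}=p_n+1$ the target flag acquires one extra subspace, and the prescription realizes the extension $\bar\varphi$ of the same example, with $\bar\kappa(i)=i$ for $i<i_0$ and $\bar\kappa(i)=i-1$ for $i\geq i_0$. In both cases $\eta_n$ is a strict standard extension, and in particular a closed embedding of flag varieties.

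Next I would produce the exhaustion. For each $n$, I would extend a flag $M_\bullet=(M_1\subset\cdots\subset M_{p_n-1})\in X_n$ to a generalized flag in $V$ by gluing it with the part of $\mathcal{F}$ supported on $\{e_k:k>n\}$; this defines a map $X_n\to\Flags(\mathcal{F},E)$, and the explicit form of $\eta_n$ is designed exactly so that these maps commute with the $\eta_n$, yielding a well-defined map $\bigcup_n X_n\to\Flags(\mathcal{F},E)$. Injectivity is clear, since a point of $X_n$ is recovered by intersecting its image with $V_n$. For surjectivity I would invoke the definition of $E$-commensurability: any $\mathcal{G}\in\Flags(\mathcal{F},E)$ agrees with $\mathcal{F}$ modulo a finite-dimensional subspace $U$, so choosing $n$ large enough that $U\subset V_n$ exhibits $\mathcal{G}$ as the image of the flag $\{G\cap V_n:G\in\mathcal{G}\}\in X_n$. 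Since each $X_n$ is projective and the $\eta_n$ are closed embeddings, the resulting exhaustion $\Flags(\mathcal{F},E)=\bigcup_n X_n$ endows this set with the structure of a locally projective ind-variety in the sense recalled after Lemma~\ref{L2.1}.

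For part~(b), I would consider the orbit map $\GL(E)/\mathbf{P}\to\Flags(\mathcal{F},E)$, $g\mathbf{P}\mapsto g\cdot\mathcal{F}$, which is well defined because $\mathbf{P}$ stabilizes $\mathcal{F}$ and injective by the definition of the stabilizer. Bijectivity then amounts to the transitivity of $\GL(E)$ on $E$-commensurable generalized flags: given $\mathcal{G}$ commensurable with $\mathcal{F}$, one uses an order isomorphism $\mathcal{F}\to\mathcal{G}$ together with the finite-dimensional subspace from the commensurability condition to build an element $g\in\GL(V)$ that carries $\mathcal{F}$ to $\mathcal{G}$ and fixes all but finitely many $e_k$, hence lies in $\GL(E)$. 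To upgrade this bijection to an isomorphism of ind-varieties I would match the two exhaustions: the finite-dimensional subgroup $\GL(V_n)\subset\GL(E)$ acts on $X_n$ with stabilizer the parabolic subgroup $\GL(V_n)\cap\mathbf{P}$, so that $\GL(V_n)/(\GL(V_n)\cap\mathbf{P})\cong X_n$ as varieties, compatibly with the embeddings on both sides, and passage to the limit gives the desired isomorphism.

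I expect the main obstacle to lie in the surjectivity and transitivity arguments, which carry the genuine content of both parts: one must show that $E$-commensurability is exactly what guarantees that a generalized flag appears at some finite stage $X_n$ (for part~(a)) and lies in the single $\GL(E)$-orbit of $\mathcal{F}$ (for part~(b)). Secondarily, the index bookkeeping in the definition of the $\eta_n$---in particular the dichotomy $p_{n+1}=p_n$ versus $p_{n+1}=p_n+1$ governed by the position $i_0$ of $e_{n+1}$---must be tracked carefully to ensure that the maps $X_n\to\Flags(\mathcal{F},E)$ genuinely commute with the $\eta_n$, which is what makes the limit identification canonical.
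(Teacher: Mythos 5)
The paper does not prove this proposition: it is recalled from \cite{Dimitrov-Penkov} as background, so there is no internal proof to compare against. Your reconstruction is correct and matches the argument the paper implicitly relies on; in particular, your identification of the two cases of $\eta_n$ with the maps $\varphi$ and $\bar\varphi$ of Example~\ref{E3.5} (with $\dim Z=1$, $k_0=i_0$) is exactly the intended reading, and the orbit-map/exhaustion-matching scheme for part~(b) is the standard Dimitrov--Penkov argument. One minor imprecision in your surjectivity step: taking $n$ large enough that $U\subset V_n$ is not quite sufficient, since $\mathcal{G}$ is only \emph{weakly} $E$-compatible; you also need $V_n$ to contain the finitely many vectors in which a basis $E'$ compatible with $\mathcal{G}$ differs from $E$, so that $G=(G\cap V_n)\oplus\langle e_k\in G: k>n\rangle$ holds for every $G\in\mathcal{G}$ and the gluing map genuinely recovers $\mathcal{G}$.
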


Note also that, up to isomorphism, the ind-variety $\Flags(\mathcal{F},E)$ only depends on the {\em type} of $\mathcal{F}$, i.e., on  the isomorphism type of the totally ordered set $(\mathcal{F},\subset)$ and on the dimensions $\dim F''/F'$ of the quotients of consecutive subspaces in $\mathcal{F}$.






%
%
%


\section{Embedding of flag varieties arising from diagonal embedding of groups}
\label{section-5}

In this section we study embeddings of flag varieties induced by strictly diagonal embeddings of general linear groups. 

Let us fix the following data:\begin{itemize}
\item positive integers $m<n$ such that $m$ divides $n$, and $d:=\frac{n}{m}$;
\item $\GL(m)$ seen as a subgroup of $\GL(n)$ through the diagonal embedding $$x\mapsto\mathrm{diag}(x,\ldots,x);$$
\item a decomposition of the natural representation $V:=\C^n$ of $\GL(n)$ as 
$$V=W^{(1)}\oplus\ldots\oplus W^{(d)}$$ where $W^{(i)}:=\{0\}^{(i-1)m}\times\C^m\times\{0\}^{(d-i)m}$; let  $\chi_i:W:=\C^m\to W^{(i)}$ be the natural isomorphism.
For a 
subspace $M\subset W$, we write 
$M^{(i)}:=\chi_i(M)$.
\end{itemize}

\subsection{Restriction of parabolic subgroup}
\label{section-5.1}
Let $\{e_1,\ldots,e_n\}$ be a basis of $V$ such that $\{e_1,\ldots,e_m\}$ is a basis of $W^{(1)}\cong W$.
By $H=H(n)\subset\GL(n)$ we denote the  maximal torus for which $e_1,\ldots,e_n$ are eigenvectors. Then $H':=H\cap\GL(m)$ is a maximal torus of $\GL(m)$.

A parabolic subgroup $P=P(n)\subset \GL(n)$ that contains $H$ is the stabilizer of a flag
\[
\mathcal{F}_\alpha=\{\langle e_i:\alpha(i)\leq j\rangle\}_{j=1}^{p-1}
\]
for some surjective map
$\alpha:\{1,\ldots,n\}\to \{1,\ldots,p\}$.
The following statement determines under what condition the intersection $P\cap \GL(m)$ is a parabolic subgroup.

\begin{lemma}
\label{L5.1}
Consider the map 
\[\beta:\{1,\ldots,m\}\to\{1,\ldots,p\}^d,\ r\mapsto (\alpha(r),\alpha(m+r),\ldots,\alpha((d-1)m+r)),\]
and denote by $\mathcal{I}$ the image of $\beta$.
Let $\leq$ denote the partial order on $\{1,\ldots,p\}^d$ such that $(x_1,\ldots,x_d)\leq(y_1,\ldots,y_d)$ if $x_i\leq y_i$ for all $i$.

{\rm (a)} The intersection $Q:=P\cap\GL(m)$ is a parabolic subgroup of $\GL(m)$ if and only if $\leq$ restricts to a total order on $\mathcal{I}$. 
Moreover, letting $b_1,\ldots,b_q$ be the elements of $\mathcal{I}$ written in increasing order, we have
\[Q=\mathrm{Stab}_{\GL(m)}(\mathcal{F}_\beta)\]
where 
\[\mathcal{F}_\beta=\{\langle e_i:\beta(i)\leq b_j\}_{j=1}^{q-1}.\]
In particular, if $d_j=\#\beta^{-1}(\{b_1,\ldots,b_j\})$ then $\GL(m)/Q$ can be identified with the flag variety $\Flags(d_1,\ldots,d_{q-1};W)$.

{\rm (b)} If $Q$ is a parabolic subgroup, the inclusion $U_Q\subset U_P$ of unipotent radicals holds if and only if any two distinct elements $(x_1,\ldots,x_d),(y_1,\ldots,y_d)$ of $\mathcal{I}$ satisfy $x_i\not=y_i$ for all $i\in\{1,\ldots,d\}$. 
\end{lemma}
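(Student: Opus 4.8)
The plan is to describe $Q$ as a simultaneous stabilizer and then read off both parts from the pattern of matrix entries that are allowed to be nonzero in $Q$. Writing the flag $\mathcal{F}_\alpha$ block by block, one has $F_j=\bigoplus_{i=1}^d\chi_i(M_i^{[j]})$, where $M_i^{[j]}=\langle e_r:\beta(r)_i\leq j\rangle\subset W$ and $\beta(r)_i$ denotes the $i$-th coordinate of $\beta(r)$. Since $\diag(x,\ldots,x)$ preserves each block $W^{(i)}$ and acts there as $\chi_i x\chi_i^{-1}$, such an element lies in $P$ if and only if $x$ preserves every $M_i^{[j]}$; hence $Q=\bigcap_{i=1}^d\mathrm{Stab}_{\GL(m)}(\{M_i^{[j]}\}_j)$ is the simultaneous stabilizer of the $d$ coordinate flags arising in the diagonal blocks. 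In coordinates, $x=(x_{sr})\in Q$ if and only if $x_{sr}=0$ whenever $\beta(s)_i>\beta(r)_i$ for some $i$, i.e. $x_{sr}$ is allowed to be nonzero exactly when $\beta(s)\leq\beta(r)$. Consequently, with $\mathfrak{h}'=\Lie(H')$, one has $\mathfrak{q}:=\Lie(Q)=\mathfrak{h}'\oplus\bigoplus_{\beta(s)\leq\beta(r),\ s\neq r}\C\,E_{sr}$, and the set of roots $\{\epsilon_s-\epsilon_r:\beta(s)\leq\beta(r)\}$ is closed by transitivity of $\leq$, so $\mathfrak{q}$ is a genuine subalgebra.

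For part (a) I would invoke the standard criterion that a subalgebra containing the Cartan $\mathfrak{h}'$ is parabolic if and only if for every root $\gamma$ at least one of $\pm\gamma$ lies in it. For two basis indices $s,r$, the roots $\epsilon_s-\epsilon_r$ and $\epsilon_r-\epsilon_s$ belong to $\mathfrak{q}$ precisely when $\beta(s)\leq\beta(r)$, respectively $\beta(r)\leq\beta(s)$. Thus $\mathfrak{q}$ is parabolic if and only if any two values $\beta(s),\beta(r)$ are comparable, which is exactly the condition that $\leq$ restricts to a total order on $\mathcal{I}=\beta(\{1,\ldots,m\})$. When this holds, ordering $\mathcal{I}$ as $b_1<\cdots<b_q$, the subspaces $\langle e_r:\beta(r)\leq b_j\rangle$ form a genuine flag $\mathcal{F}_\beta$ with $Q=\mathrm{Stab}_{\GL(m)}(\mathcal{F}_\beta)$, yielding the identification $\GL(m)/Q\cong\Flags(d_1,\ldots,d_{q-1};W)$ with $d_j=\#\beta^{-1}(\{b_1,\ldots,b_j\})$.

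For part (b), assume $Q$ is parabolic, so $\leq$ is total on $\mathcal{I}$. The nilradicals are $\mathfrak{u}_P=\bigoplus_{\alpha(l)<\alpha(k)}\C\,E_{lk}$ (indices in $\{1,\ldots,n\}$) and $\mathfrak{u}_Q=\bigoplus_{\beta(s)<\beta(r)}\C\,E_{sr}$ (indices in $\{1,\ldots,m\}$), the strict inequalities distinguishing the radicals from the full parabolics. The differential of the diagonal embedding sends $E_{sr}\mapsto\sum_{i=1}^d E_{(i-1)m+s,\,(i-1)m+r}$, and this image lies in $\mathfrak{u}_P$ if and only if $\alpha((i-1)m+s)<\alpha((i-1)m+r)$, i.e. $\beta(s)_i<\beta(r)_i$, for every $i$. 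Hence $\mathfrak{u}_Q\subseteq\mathfrak{u}_P$ if and only if every pair $\beta(s)<\beta(r)$ in $\mathcal{I}$ satisfies $\beta(s)_i<\beta(r)_i$ in all coordinates; since $\beta(s)\leq\beta(r)$ already forces $\beta(s)_i\leq\beta(r)_i$, the extra requirement is precisely that the two tuples differ in every coordinate. As this is symmetric and, by totality, every two distinct elements of $\mathcal{I}$ are comparable, $\mathfrak{u}_Q\subseteq\mathfrak{u}_P$ if and only if any two distinct $(x_1,\ldots,x_d),(y_1,\ldots,y_d)\in\mathcal{I}$ satisfy $x_i\neq y_i$ for all $i$. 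Finally, because $U_Q$ and $U_P$ are connected unipotent groups in characteristic zero, the inclusion $U_Q\subseteq U_P$ is equivalent to $\mathfrak{u}_Q\subseteq\mathfrak{u}_P$ via the exponential, which finishes the proof.

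I expect the only genuine obstacle to be bookkeeping: matching the block-diagonal image of each $E_{sr}$ against $\mathfrak{u}_P$ and keeping strict versus non-strict inequalities straight, together with the check that the allowed-entry set is closed so that $\mathfrak{q}$ really is a subalgebra. The conceptual heart is the reduction in part (a) to comparability of the values of $\beta$, after which both statements follow from the standard root-space description of parabolic subalgebras and their nilradicals.
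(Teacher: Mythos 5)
Your proof is correct and follows essentially the same route as the paper: both reduce to the observation that the entries (root spaces) allowed in $Q$ are exactly those indexed by pairs with $\beta(s)\leq\beta(r)$, then apply the criterion that a subalgebra containing a Cartan subalgebra is parabolic if and only if it contains a root space from each pair $\pm\gamma$, and finally compare nilradicals through the diagonal embedding for part (b). The only cosmetic difference is that you obtain the entry pattern from a group-level description of $Q$ as the simultaneous stabilizer of the $d$ coordinate flags $\{M_i^{[j]}\}_j$ (which, as a bonus, gives $Q=\mathrm{Stab}_{\GL(m)}(\mathcal{F}_\beta)$ directly, without any appeal to connectedness), whereas the paper performs the identical computation at the Lie-algebra level.
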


\begin{proof}
(a) We have a decomposition
\[\gl(n)=\gl(V)=\mathfrak{h}\oplus\bigoplus_{1\leq i\not=j\leq n} \mathfrak{g}_{i,j}\]
where $\mathfrak{h}=\Lie\,H$ and $\mathfrak{g}_{i,j}=\mathbb{C}(e_i\otimes e_j^*)$. With this notation,
\begin{equation}
\label{p}
\mathfrak{p}:=\Lie\,P=\mathfrak{h}\oplus\bigoplus_{\alpha(i)\leq \alpha(j)}\mathfrak{g}_{i,j}\supset\nil(\mathfrak{p})=\bigoplus_{\alpha(i)<\alpha(j)}\mathfrak{g}_{i,j},
\end{equation}
where $\nil(\mathfrak{p})$ is the nilpotent radical of $\mathfrak{p}$.

There is a similar decomposition
\[\gl(m)=\gl(W)=\mathfrak{h}'\oplus\bigoplus_{1\leq i\not=j\leq m}\mathfrak{g}'_{i,j}.\]
Set $\mathfrak{q}:=\Lie\,Q$ where $Q=P\cap\GL(m)$ as before. Since we already know that $\mathfrak{h}'\subset\mathfrak{q}$, the subalgebra $\mathfrak{q}$ is parabolic if and only if
\begin{equation}
\label{cond-parabolic}
1\leq i\not=j\leq m\quad\Longrightarrow\quad (\mathfrak{g}'_{i,j}\subset\mathfrak{q}\quad\mbox{or}\quad\mathfrak{g}'_{j,i}\subset\mathfrak{q}). \end{equation}
In view of (\ref{p}) and the diagonal embedding $\gl(m)\subset\gl(n)$, whenever $1\leq i\not=j\leq m$ we have the equivalence
\begin{eqnarray*}
\mathfrak{g}'_{i,j}\subset\mathfrak{q}& \Longleftrightarrow& \mathfrak{g}_{i+km,j+km}\subset\mathfrak{p}\ \ \forall k=0,\ldots,d-1\\
&\Longleftrightarrow& \alpha(i+km)\leq \alpha(j+km)\ \forall k=0,\ldots,d-1 \\
&\Longleftrightarrow& \beta(i)\leq \beta(j).
\end{eqnarray*}
Hence, from (\ref{cond-parabolic}) we obtain that
$\mathfrak{q}$ is a parabolic subalgebra of $\gl(m)$ if and only if
\[1\leq i\not=j\leq m\quad\Longrightarrow\quad (\beta(i)\leq\beta(j)\quad\mbox{or}\quad\beta(j)\leq\beta(i)).\]
The condition means that $\leq$ is a total order set on $\mathcal{I}$.
We also have the equality
\begin{eqnarray*}
\mathfrak{q}=\mathfrak{h}\oplus\bigoplus_{\beta(i)\leq\beta(j)}\mathfrak{g}'_{i,j}&=&\{X\in\gl(W):X(\langle e_i:\beta(i)\leq b_j\rangle)\subset\langle e_i:\beta(i)\leq b_j\rangle\ \forall j\}\\&=&\Lie(\mathrm{Stab}_{\GL(m)}(\mathcal{F}_\beta))
\end{eqnarray*}
which implies that $Q=\mathrm{Stab}_{\GL(m)}(\mathcal{F}_\beta)$.

(b) Assume that $Q$ is a parabolic subgroup of $\GL(m)$. The inclusion $U_Q\subset U_P$ holds if and only if the similar inclusion holds for the nilradicals of the Lie algebras. Through the diagonal embedding of $\gl(m)$ into $\gl(n)$, the nilradical of $\mathfrak{q}$ can be described as
\[\nil(\mathfrak{q})=\bigoplus_{\substack{1\leq i\not=j\leq m \\\beta(i)<\beta(j)}}(\mathfrak{g}'_{i,j}\oplus
\mathfrak{g}'_{i+m,j+m}\oplus\ldots\oplus\mathfrak{g}'_{i+(d-1)m,j+(d-1)m}).\]
Therefore, the desired inclusion $\nil(\mathfrak{q})\subset\nil(\mathfrak{p})$ holds if and only if, for all $i,j\in\{1,\ldots,m\}$,
\[\beta(i)<\beta(j)\quad\Longleftrightarrow\quad \alpha(i+km)<\alpha(j+km)\ \ \forall k\in\{0,\ldots,d-1\}.\]
This condition is equivalent to the one stated in (b) (knowing that the partial order $\leq$ restricts to a total order on $\mathcal{I}$, due to (a)).
\end{proof}

\subsection{Diagonal embedding of flag varieties}
Assuming that the condition of Lemma \ref{L5.1}\,(a) is fulfilled, we now describe the embedding of partial flag varieties
\begin{equation}
\label{phi}
\phi:\GL(m)/Q=\Flags(d_1,\ldots,d_{q-1};W)\to \GL(n)/P
\end{equation}
obtained in this case. We rely on a combinatorial object, introduced in the next definition.

\begin{definition}
(a) We call \textit{E-graph} an unoriented graph
with the following features:
\begin{itemize}
\item The vertices consist of two sets $\{l_1,\ldots,l_q\}$ (``left vertices'') and $\{r_1,\ldots,r_p\}$ (``right vertices''), displayed from top to bottom in two columns, and two vertices are joined by an edge only if they belong to different sets. 
\item The edges display into $d$ subsets $E_c$ corresponding to a given colour $c\in\{1,\ldots,d\}$.
\item Every vertex is incident with at least one edge, and every vertex is incident with at most one edge of a given colour.
The vertex $l_q$ is incident with exactly $d$ edges (one per colour).
\item Two edges of the same colour never cross, that is, if $(l_i,r_j)$ and $(l_{k},r_{\ell})$, with $i<k$, are joined with two edges of the same colour, then $j<\ell$.
\end{itemize}
In an E-graph, we call ``bounding edges'' the edges passing through $l_q$, and we call ``ordinary edges'' all other edges.

(b) With the notation of Lemma \ref{L5.1}, we define the E-graph $\mathcal{G}(\alpha,\beta)$ such that
\begin{itemize}
\item we put an edge of colour $k$ between $l_i$ and $r_j$ whenever $b_i=(x_1,\ldots,x_d)$ satisfies $x_k=j$ and $i$ is maximal for this property.
\end{itemize}
(The conditions given in Lemma \ref{L5.1}\,(a) justify that $\mathcal{G}(\alpha,\beta)$ is a well-defined E-graph.)
\hfill $\blacksquare$
\end{definition}

In the following statement we describe explicitly the embedding $\phi$ of (\ref{phi}) and its properties in terms of the E-graph $\mathcal{G}(\alpha,\beta)$.

\begin{proposition}
\label{P-embeddings}
{\rm (a)} The map $\phi:Y=\GL(m)/Q=\Flags(d_1,\ldots,d_{q-1};W)\to X=\GL(n)/P$ is given by
\[\phi:\{F_1,\ldots,F_{q-1}\}\mapsto \{V_1,\ldots,V_{p-1}\}\]
where for all $j\in\{1,\ldots,p-1\}$ we have
\begin{equation}
\label{formula}
V_j=V_{j-1}+F_{i_1}^{(1)}\oplus\ldots\oplus F_{i_d}^{(d)},
\end{equation}
where $V_0=F_0:=0$, $F_q:=W$, and
$$i_k:=\left\{\begin{array}{ll}
i & \mbox{if the vertex $r_j$ is incident with an edge $(l_i,r_j)$ of colour $k$ in $\mathcal{G}(\alpha,\beta)$,} \\
0 & \mbox{if there is no edge of colours $k$ passing through $r_j$.}
\end{array}\right.
$$
We have also
\[V_j=F_{i'_1}^{(1)}\oplus\ldots\oplus F_{i'_d}^{(d)},\]
where
$i'_k$ is the index of the left end point of the last edge of colour $k$ arriving at or above $r_j$ in $\mathcal{G}(\alpha,\beta)$,
with $i'_k=0$ if there is no such edge.

{\rm (b)} Let $([L_1],\ldots,[L_{p-1}])$
and $([M_1],\ldots,[M_{q-1}])$ denote the sequences of preferred generators of $\Pic\,X$ and $\Pic\,Y$, respectively. The map $\phi^*:\Pic\,X\to\Pic\,Y$ is given by
\[\phi^*[L_j]=\sum_{k=1}^d[M_{i'_k}],\]
where we set by convention $[M_0]=[M_q]=0$.

{\rm (c)} The map $\phi$ is linear if and only if, whenever $r_j,r_{j'}$ with $j<j'$ are incident with edges of the same colour $c$ in the graph $\mathcal{G}(\alpha,\beta)$, every ordinary edge arriving at $r_{j''}$ for $j\leq j''<j'$ is also of colour $c$. 

{\rm (d)} The map $\phi$ is a standard extension if and only if all ordinary edges of $\mathcal{G}(\alpha,\beta)$ are of the same colour.
Moreover, in this case, $\phi$ is a strict standard extension.
\end{proposition}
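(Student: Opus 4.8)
\textbf{Plan for proving Proposition \ref{P-embeddings}(a)--(d).}

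The plan is to build everything on part (a), which gives the explicit formula for $\phi$, and then read off (b), (c), (d) from that formula together with the combinatorics of the E-graph $\mathcal{G}(\alpha,\beta)$. For part (a), I would start from the defining data: a point of $Y=\GL(m)/Q$ is the flag $\{F_1,\ldots,F_{q-1}\}$ stabilized by a conjugate of $Q=\mathrm{Stab}_{\GL(m)}(\mathcal{F}_\beta)$, and I want to identify its image in $X=\GL(n)/P$. The key observation is that under the diagonal embedding $\GL(m)\hookrightarrow\GL(n)$ with $V=W^{(1)}\oplus\cdots\oplus W^{(d)}$, a subspace $F\subset W$ contributes the ``diagonal'' copies $F^{(1)},\ldots,F^{(d)}$ in the respective summands. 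So I would directly compute $\varphi(\mathcal{F})_j=\bigcap_{g\in\GL(m)}(\text{flag through }r_j)$-type conditions, tracking for each right vertex $r_j$ which left vertex $l_{i}$ and which colour $k$ feed into it. Concretely, using Lemma \ref{L5.1} the index $\alpha(km+r)=$ the $k$-th coordinate of $\beta(r)$, so the $j$-th subspace $V_j=\langle e_i:\alpha(i)\le j\rangle$ decomposes along the $d$ blocks, and in block $k$ it equals $\langle e_r^{(k)}:\beta(r)_k\le j\rangle=F_{i'_k}^{(k)}$ where $i'_k$ is exactly the left endpoint of the last colour-$k$ edge arriving at or above $r_j$. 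This gives the closed form $V_j=F_{i'_1}^{(1)}\oplus\cdots\oplus F_{i'_d}^{(d)}$ and the incremental form (\ref{formula}) follows by comparing $V_{j-1}$ and $V_j$ (only colours whose edges land exactly at $r_j$ change the index).

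For part (b), I would compute $\phi^*[L_j]$ by pulling back the hyperplane class on $\Grass(\dim V_j;V)$. Since $V_j=\bigoplus_k F_{i'_k}^{(k)}$ and each summand is cut out by a Plücker-type condition in the $k$-th block, the determinant-of-tautological-bundle class pulls back additively across the blocks, giving $\phi^*[L_j]=\sum_{k=1}^d[M_{i'_k}]$ with the convention $[M_0]=[M_q]=0$ absorbing the blocks where $i'_k\in\{0,q\}$ contribute no moving subspace. The cleanest route is to observe that the map $\phi$ factors through the product of Grassmannian projections and that on each block $F\mapsto F^{(k)}$ pulls $\mathcal{O}(1)$ back to $\mathcal{O}(1)$, so the tensor product of line bundles gives the stated sum of classes.

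Parts (c) and (d) are then formal consequences. For (c), linearity means each $\phi^*[L_j]$ is either $0$ or a single generator $[M_i]$; by the formula in (b) this happens exactly when, for each $j$, at most one colour contributes a non-extreme index $i'_k\notin\{0,q\}$, and comparing consecutive $j$'s translates precisely into the stated crossing/colour condition on ordinary edges between $r_j$ and $r_{j'}$. For (d), a standard extension is characterized (via Definition \ref{def-SE} and the $C_i(\varphi)=Z_i$ remark) by the existence of a single ambient complement $Z$ and a monotone $\kappa$; I would show that when all ordinary edges share one colour $c$, the block $W^{(c)}$ plays the role of $\varepsilon(W)$ and $\bigoplus_{k\ne c}F_{i'_k}^{(k)}$ is forced to be one of finitely many fixed subspaces $Z_j$, yielding a strict standard extension, and conversely that two ordinary edges of different colours force the moving part of $V_j$ to depend on two independent Grassmannian factors, which is incompatible with formula (\ref{formula:standard-extension}).

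The main obstacle I anticipate is bookkeeping in part (a): correctly matching the combinatorial data (which $l_i$, which colour $k$, ``last edge arriving at or above $r_j$'') to the subspace-theoretic computation, and verifying the no-crossing condition in the E-graph guarantees that the indices $i'_k$ are well-defined and that the resulting $V_j$ genuinely form an increasing chain of the correct dimensions $\dim V_j=j$-th jump. Once (a) is nailed down rigorously, (b)--(d) should reduce to unwinding the definitions of linearity and standard extension against the explicit formulas.
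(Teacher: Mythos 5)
Your proposal is correct and follows essentially the same route as the paper: verify the formula of part (a) on the base flag by matching $\alpha$, $\beta$ and the E-graph combinatorics blockwise (the paper establishes the incremental form first and the closed form $V_j=F_{i'_1}^{(1)}\oplus\cdots\oplus F_{i'_d}^{(d)}$ as a consequence, you do the reverse), then read (b) off the closed form and deduce (c), (d) formally, exactly as the paper does. The one step you should make explicit is the $\GL(m)$-equivariance of both $\phi$ and the map defined by the formula, since your concrete computation is carried out only on the base point $V_j=\langle e_i:\alpha(i)\le j\rangle$ and equivariance is what propagates it to every flag in $Y$ -- the paper states this reduction explicitly.
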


\begin{proof}
(a) As in Section \ref{section-5.1}, we write $P=\mathrm{Stab}(\F_\alpha)$ where $\alpha:\{1,\ldots,n\}\to\{1,\ldots,p\}$ is surjective. 
Then we have $Q=\mathrm{Stab}(\mathcal{F}_\beta)$ where $\beta:\{1,\ldots,m\}\to \mathcal{I}\subset\{1,\ldots,p\}^d$ is described in Lemma \ref{L5.1}. 
Let $\hat\phi:\GL(m)/Q\to \GL(n)/P$ be the map given by formula (\ref{formula}).
Thus we have to show that $\hat\phi=\phi$.
Since the maps $\phi$ and $\hat\phi$ are $\GL(m)$-equivariant, it suffices to show that $\hat\phi(\F_\beta)=\F_\alpha$. 
We write $\F_\alpha=\{F_{\alpha,1},\ldots,F_{\alpha,p-1}\}$ and $\F_\beta=\{F_{\beta,1},\ldots,F_{\beta,q-1}\}$. For $j\in\{1,\ldots,p\}$, we have
\begin{equation}
\label{6-new}
F_{\alpha,j}=\langle e_i:\alpha(i)\leq j\rangle=F_{\alpha,j-1}+\langle e_i:\alpha(i)=j\rangle
\end{equation}
where $F_{\alpha,0}:=0$.
Every $i\in\{1,\ldots,n\}$ can be written $i=(k-1)m+r\in\{1,\ldots,n\}$ with $k\in\{1,\ldots,d\}$ and $r\in\{1,\ldots,m\}$, so that $e_i=\chi_k(e_r)$.

Assume that $\alpha(i)=j$. Then there is $b_{i'}=(x_1,\ldots,x_d)\in\mathcal{I}$ with $i'\in\{1,\ldots,q\}$ maximal such that $x_k=j$. Moreover there is $s\in\{r,\ldots,m\}$ such that $x_{\ell}=\alpha((\ell-1)m+s)$ for all $\ell\in\{1,\ldots,d\}$. This implies that the graph $\mathcal{G}(\alpha,\beta)$ contains an edge of colour $k$ joining $b_{i'}$ and $j$, and we have
$$
e_i=\chi_k(e_r)\in \chi_k(F_{\beta,i'})=F_{\beta,i'}^{(k)}
$$
where $F_{\beta,q}:=W$.
Conversely, assume that there is an edge of colour $k$ joining $b_{i'}$ and $j$. 
The subspace $F_{\beta,i'}^{(k)}$ is spanned by vectors of the form $\chi_k(e_r)$ with
 $r\in\{1,\ldots,m\}$  such that $\beta(r)=(\alpha((\ell-1)m+r)_{\ell=1}^d\leq b_{i'}$. The latter inequality implies  $\alpha((k-1)m+r)\leq j$. Hence $\chi_k(e_r)=e_{(k-1)m+r}\in F_{\alpha,j}$.

Combining these observations with (\ref{6-new}), we deduce that
$$
F_{\alpha,j}=F_{\alpha,j-1}+F_{\beta,i_1}^{(1)}\oplus\ldots\oplus F_{\beta,i_d}^{(d)}
$$
where $i_1,\ldots,i_d$ are as defined in the statement of the proposition.
Therefore, the claimed equality $\hat\phi(\mathcal{F}_\beta)=\mathcal{F}_\alpha$ holds.

The second formula stated in (a) is an immediate consequence of (\ref{formula}). The proof of (a) is complete.

Part (b) is a corollary of the second formula in (a), whereas parts (c) and (d) of the proposition easily follow from parts (a) and (b). The proof of the proposition is complete.
\end{proof}

\begin{remark}
\label{R-constant}
Proposition \ref{P-embeddings} shows how the E-graph $\mathcal{G}(\alpha,\beta)$ describes the embedding $\phi:Y\to X$.
Moreover, the chain of constant spaces $(C_j(\phi))$ is expressed in the following way. 
We enumerate the colours $k_1,\ldots,k_d$ so that $i_1\leq\ldots\leq i_d$ where $r_{i_j}$ is the right end point of the bounding edge of colour $k_j$. Then 
$$
C_j(\phi)=F_q^{(k_1)}\oplus\ldots\oplus F_q^{(k_j)} \quad\mbox{for $j=1,\ldots,d$.}
$$
\hfill $\blacksquare$
\end{remark}

\begin{example}
\label{E-standard-extension}
(a)
Let us consider for instance the graph
\begin{center}
\mbox{\tiny
$\begin{picture}(42,80)(0,0)
\put(-10,75){$l_1$}\put(0,75){$\bullet$}
\put(-10,55){$l_2$}\put(0,55){$\bullet$}
\put(-10,35){$l_3$}\put(0,35){$\bullet$}
\put(65,75){$r_1$}\put(60,75){$\bullet$}
\put(65,55){$r_2$}\put(60,55){$\bullet$}
\put(65,35){$r_3$}\put(60,35){$\bullet$}
\put(65,15){$r_{4}$}\put(60,15){$\bullet$}
\put(2,77){\line(1,0){59}}
\textcolor{blue}{\put(2,57){\line(1,0){59}}}
\put(2,57){\line(3,-1){59}}
\textcolor{blue}{\put(2,37){\line(1,0){59}}}
\put(2,37){\line(3,-1){59}}
\put(85,35){.}
\end{picture}$}
\end{center}
It encodes an embedding 
\begin{eqnarray*}
\phi:X=\Flags(d_1,d_2;\mathbb{C}^n) & \to &  Y=\Flags(d_1,d_1+d_2,d_2+n;\mathbb{C}^{2n}=\mathbb{C}^n\oplus\overline{\mathbb{C}^n})\\
\{V_1,V_2\}&\mapsto& \{V_1,V_1\oplus\overline{V_2},V_2\oplus\overline{\mathbb{C}^3}\}.
\end{eqnarray*}
If we denote by $([L_1],[L_2])$ and $([M_1],[M_2],[M_3])$  the sets of preferred generators of the Picard groups of $X$ and $Y$ respectively, then the induced map $\phi^*:\Pic\,Y\to\Pic\,X$ is given by
$$
[M_1]\mapsto[L_1],\quad [M_2]\mapsto [L_1]+[L_2],\quad [M_3]\mapsto [L_2].
$$
Thus $\phi$ is not linear in this case.

(b) Here we consider the graph
\begin{center}
\mbox{\tiny
$\begin{picture}(42,80)(0,0)
\put(-10,75){$l_1$}\put(0,75){$\bullet$}
\put(-10,55){$l_2$}\put(0,55){$\bullet$}
\put(1,43){$\vdots$}
\put(-10,35){$l_i$}\put(0,35){$\bullet$}
\put(1,23){$\vdots$}
\put(-10,15){$l_q$}\put(0,15){$\bullet$}
\put(65,75){$r_1$}\put(60,75){$\bullet$}
\put(65,55){$r_2$}\put(60,55){$\bullet$}
\put(61,43){$\vdots$}
\put(65,35){$r_i$}\put(60,35){$\bullet$}
\put(61,03){$\vdots$}
\put(65,15){$r_{i+1}$}\put(60,15){$\bullet$}
\put(65,-5){$r_{q+1}$}\put(60,-5){$\bullet$}
\put(2,77){\line(1,0){59}}
\put(2,57){\line(1,0){59}}
\put(2,37){\line(3,-1){59}}
\put(2,17){\line(3,-1){59}}
\textcolor{blue}{\put(2,17){\line(3,1){59}}}
\put(85,35){.}
\end{picture}$}
\end{center}
There are two colours which means that the embedding is from a flag variety of a space $V$ to the flag variety of a doubled space $W=V\oplus\overline{V}$:
\[\mathrm{Fl}(d_1,\ldots,d_{q-1};V)\hookrightarrow \mathrm{Fl}(d'_1,\ldots,d'_{q};W=V\oplus\overline{V}).\]
The embedding has the following explicit form
\begin{equation}
\label{se1}
\{F_1,\ldots,F_{q-1}\}\mapsto\{F_1,\ldots,F_{i-1},F_{i-1}\oplus\overline{V},\ldots,F_{q-1}\oplus\overline{V}\}.
\end{equation}
Note that $\dim F_{i-1}\oplus\overline{V}/\dim F_{i-1}=\dim V$. The dimensions of the other quotients are unchanged.

(c) Now consider
\begin{center}
\mbox{\tiny
$\begin{picture}(42,80)(0,0)
\put(-10,75){$l_1$}\put(0,75){$\bullet$}
\put(-10,55){$l_2$}\put(0,55){$\bullet$}
\put(1,43){$\vdots$}
\put(-10,35){$l_i$}\put(0,35){$\bullet$}
\put(1,23){$\vdots$}
\put(-10,15){$l_q$}\put(0,15){$\bullet$}
\put(65,75){$r_1$}\put(60,75){$\bullet$}
\put(65,55){$r_2$}\put(60,55){$\bullet$}
\put(61,43){$\vdots$}
\put(65,35){$r_i$}\put(60,35){$\bullet$}
\put(61,23){$\vdots$}
\put(65,15){$r_{q}$}\put(60,15){$\bullet$}
\put(2,77){\line(1,0){59}}
\put(2,57){\line(1,0){59}}
\put(2,37){\line(1,0){59}}
\put(2,17){\line(1,0){59}}
\textcolor{blue}{\put(2,17){\line(3,1){59}}}
\put(85,35){.}
\end{picture}$}
\end{center}
In this case we get an embedding\[\mathrm{Fl}(d_1,\ldots,d_q;V)\hookrightarrow \mathrm{Fl}(d'_1,\ldots,d'_q;W=V\oplus\overline{V})\]
given by
\begin{equation}
\label{se2}
\{F_1,\ldots,F_{q-1}\}\mapsto\{F_1,\ldots,F_{i-1},F_{i}\oplus\overline{V},\ldots,F_{q-1}\oplus\overline{V}\}.
\end{equation}
The only quotient whose dimension changes is $F_i\oplus\overline{V}/F_{i-1}$ which has dimension $\dim V+\dim F_i/F_{i-1}$.

By Proposition \ref{P-embeddings}\,(d), the embeddings of parts (a) and (b) of this example are the only possible standard extensions that can come from a diagonal embedding $\GL(n)\hookrightarrow\GL(2n)$.

(d) In the case of a diagonal embedding of the form $\GL(n)\hookrightarrow \GL(dn)$, if the embedding of flag varieties is a standard extension, then it can be described as a composition of embeddings of the previous form, involving a subspace $\overline{V}$ still of dimension $n$.
\hfill $\blacksquare$
\end{example}

\begin{remark}
The fact that $U_Q\subset U_P$ is equivalent to the following property of the graph $\mathcal{G}(\alpha,\beta)$: every left vertex is incident with exactly $d$ edges (one per colour).
\hfill $\blacksquare$
\end{remark}

Proposition \ref{P-embeddings} has the following corollary.

\begin{corollary}
\label{L-embed-Picard}
For an embedding $\phi:Y=\GL(n)/Q\to X=\GL(m)/P$ as in Proposition \ref{P-embeddings} and 
for every $j\in\{1,\ldots,q-1\}$, 
we have $\mathrm{Im}\,\phi^*\not\subset\langle [M_i]:i\in\{1,\ldots,q-1\}\setminus\{j\}\rangle$.
\end{corollary}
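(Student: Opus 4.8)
The plan is to read off the image of $\phi^*$ directly from the explicit formula of Proposition \ref{P-embeddings}\,(b) together with the combinatorics of the E-graph $\mathcal{G}(\alpha,\beta)$. Recall that
\[
\phi^*[L_j]=\sum_{k=1}^d[M_{i'_k}],
\]
where $i'_k$ is the index of the left endpoint of the last edge of colour $k$ arriving at or above $r_j$, with the convention $[M_0]=[M_q]=0$. The key structural observation I would exploit is that every generator $[M_{i'_k}]$ enters with coefficient $+1$, so no cancellation can occur: the coefficient of a fixed $[M_j]$ in $\phi^*[L_{j'}]$ equals the number of colours $k$ for which $i'_k=j$. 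Consequently it suffices to produce, for each $j\in\{1,\ldots,q-1\}$, an index $j'\in\{1,\ldots,p-1\}$ and a colour $k$ with $i'_k=j$ in the expansion of $\phi^*[L_{j'}]$; this will force $\phi^*[L_{j'}]\notin\langle[M_i]:i\ne j\rangle$.

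First I would use that every vertex of $\mathcal{G}(\alpha,\beta)$ is incident with at least one edge. Fix $j\in\{1,\ldots,q-1\}$ and choose an edge incident with the left vertex $l_j$, say of colour $k$, joining $l_j$ to some right vertex $r_{j_0}$; note that $j<q$ means $l_j\ne l_q$, so this edge is an ordinary edge. I then take $j'=j_0$ and compute the colour-$k$ contribution to $\phi^*[L_{j_0}]$: among the colour-$k$ edges arriving at or above $r_{j_0}$, the edge $(l_j,r_{j_0})$ arrives exactly at $r_{j_0}$, so its right endpoint is maximal with this property, and since at most one colour-$k$ edge is incident with $r_{j_0}$ it is the \emph{last} such edge. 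Hence $i'_k=j$ in the expansion of $\phi^*[L_{j_0}]$, and the coefficient of $[M_j]$ there is at least $1$.

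The step that needs the most care, and which I expect to be the main obstacle, is verifying that $j_0$ lies in the admissible range $\{1,\ldots,p-1\}$, i.e.\ that the chosen edge does not reach the bottom right vertex $r_p$ (for which $[L_{j_0}]$ would not be among the preferred generators of $\Pic X$). Here I would invoke that $l_q$ carries exactly $d$ edges, one of each colour, so colour $k$ possesses a bounding edge $(l_q,r_{j_0'})$. Since $j<q$, the non-crossing rule for edges of colour $k$ forces $j_0<j_0'\le p$, whence $j_0\le p-1$ as required. Combining the two steps, $[M_j]$ occurs with nonzero coefficient in $\phi^*[L_{j_0}]$, so $\phi^*[L_{j_0}]\notin\langle[M_i]:i\ne j\rangle$, and therefore $\mathrm{Im}\,\phi^*\not\subset\langle[M_i]:i\in\{1,\ldots,q-1\}\setminus\{j\}\rangle$, proving the corollary.
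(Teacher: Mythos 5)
Your proof is correct and takes essentially the same approach as the paper, which states this result as an immediate consequence of Proposition \ref{P-embeddings}: you simply make explicit why the pullback formula $\phi^*[L_j]=\sum_{k=1}^d[M_{i'_k}]$ (whose coefficients are nonnegative, so no cancellation can occur) together with the E-graph axioms forces each $[M_j]$ to appear in some $\phi^*[L_{j_0}]$. Your care in checking $j_0\leq p-1$ via the bounding edge and the non-crossing rule is precisely the combinatorial point that the paper leaves implicit.
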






\subsection{Application to ind-varieties}

\label{s-graphs}

\begin{definition}
Let $\{s_n\}_{n\geq 1}$ be an exhaustion of $\sn$.
We call {\em $\sn$-graph} a graph with infinitely many columns of vertices $B_n$, with $1\leq |B_n|\leq s_n$ for all $n\geq 1$, such that the subgraph consisting of $B_n,B_{n+1}$ and the corresponding edges is an E-graph. 
\hfill $\blacksquare$
\end{definition}

A parabolic subgroup $\mathbf{P}$ of $\GL(\sn)$ gives rise to an $\sn$-graph. According to the above proposition, this graph encodes the embeddings of flag varieties in an exhaustion of $\GL(\sn)/\mathbf{P}$.
Conversely, any $\sn$-graph arises from a parabolic subgroup $\mathbf{P}$ of $\GL(\sn)$.


\section{Ind-varieties of generalized flags as homogeneous spaces of $\GL(\sn)$}

\label{section-6}

Our purpose in this section is to characterize ind-varieties of generalized flags (introduced in Section \ref{section-4.2}) which can be realized as homogeneous spaces $\GL(\sn)/\mathbf{P}$ for the given supernatural number $\sn$.

\subsection{The case of finitely many finite-dimensional subspaces}

We start with a special situation which is easier to deal with: let $\mathbf{X}=\Flags(\F,E)$ where $\mathcal{F}=\{F'_a,F''_a\}_{a\in A}$ is an $E$-compatible generalized flag, for an arbitrary totally ordered set $(A,\leq)$, but with the assumption that
\begin{equation}
\label{9}
\dim F''_a/F'_a=+\infty\ \ \mbox{for all but finitely many $a\in A$.}
\end{equation} 

\begin{theorem}
\label{T-6.1}
If condition (\ref{9}) holds, then for every supernatural number $\sn$, there is an isomorphism of ind-varieties $\Flags(\mathcal{F},E)\cong\GL(\sn)/\mathbf{P}$ for an appropriate parabolic subgroup $\mathbf{P}\subset\GL(\sn)$.
\end{theorem}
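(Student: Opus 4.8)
The plan is to realize $\Flags(\mathcal{F},E)$ as $\GL(\sn)/\mathbf{P}$ by building an explicit exhaustion that matches the structure of $\GL(\sn)$ under an exhaustion $\{s_n\}$ of $\sn$. The key insight from condition (\ref{9}) is that only finitely many quotients $F''_a/F'_a$ are finite-dimensional, so the combinatorial ``type'' of $\mathcal{F}$ involves finitely many finite jumps sitting inside a framework of infinite-dimensional gaps. I would denote by $a_1<\ldots<a_t$ the finitely many indices in $A$ with $\dim F''_{a_i}/F'_{a_i}=:m_i<\infty$, so the generalized flag is determined up to isomorphism by these finite dimensions together with the order type of $(A,\leq)$ and the pattern of infinite gaps.

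\emph{First} I would fix an exhaustion $\{s_n\}_{n\geq1}$ of $\sn$ and seek to construct, for each $n$, a surjective map $\alpha_n:\{1,\ldots,s_n\}\to\{1,\ldots,p_n\}$ defining a parabolic $P_n=\mathrm{Stab}(\F_{\alpha_n})\subset\GL(s_n)$, in such a way that the strict diagonal embeddings $\delta_{s_n,s_{n+1}}$ restrict to parabolic-into-parabolic inclusions satisfying Lemma \ref{L5.1}\,(a). The construction should arrange that, as $n\to\infty$, the finite-dimensional flag varieties $\GL(s_n)/P_n=\Flags(d_1^{(n)},\ldots;\,\C^{s_n})$ exhaust $\Flags(\mathcal{F},E)$ with the correct embedding maps. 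Concretely, since each $s_{n+1}/s_n$ copies of $\C^{s_n}$ get diagonally stacked, I would use the extra room provided by the growing $s_n$ to ``fill in'' the infinite gaps of $\mathcal{F}$ incrementally: the finitely many finite jumps $m_1,\ldots,m_t$ stay bounded, while the subspaces realizing the infinite-dimensional quotients absorb new basis vectors at each stage. The embeddings $\eta_n$ of Proposition stated in Section \ref{section-4.2} for $\Flags(\mathcal{F},E)$ are strict standard extensions, and I would match them against the embeddings $\phi$ of Proposition \ref{P-embeddings}, which are strict standard extensions precisely when all ordinary edges of the E-graph carry the same colour (part (d)).

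\emph{The main technical step} is to verify the compatibility condition of Lemma \ref{L5.1}\,(a)---that $\leq$ restricts to a total order on the image $\mathcal{I}$ of $\beta$---at every stage. This is where condition (\ref{9}) does the real work: because only finitely many quotients are finite-dimensional, I can choose the maps $\alpha_n$ so that the ``doubling'' induced by $\delta_{s_n,s_{n+1}}$ respects the linear order, essentially by placing the repeated diagonal blocks into the infinite gaps where there is no ordering conflict. I would design the E-graph at each step so that its ordinary edges are monochromatic, guaranteeing via Proposition \ref{P-embeddings}\,(d) that each induced embedding is a strict standard extension and hence agrees (up to isomorphism of ind-varieties) with the exhaustion maps $\eta_n$ that define $\Flags(\mathcal{F},E)$.

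\emph{Finally} I would assemble the $P_n$ into a parabolic subgroup $\mathbf{P}=\bigcup_n P_n\subset\GL(\sn)$, checking that $P_{n+1}\cap\GL(s_n)=P_n$ so that $\mathbf{P}$ is well-defined and parabolic in the sense of Section \ref{section-2.3}, and then conclude that the identification of exhaustions yields the desired isomorphism $\Flags(\mathcal{F},E)\cong\GL(\sn)/\mathbf{P}$. \textbf{The hard part} will be the bookkeeping that simultaneously (i) keeps the finite jumps $m_1,\ldots,m_t$ correctly placed and sized, (ii) distributes the infinite gaps among the new basis vectors so the total order on $\mathcal{I}$ is preserved, and (iii) ensures the embeddings are strict standard extensions matching the canonical ones for $\Flags(\mathcal{F},E)$; I expect this to require an inductive construction of $\alpha_n$ together with a careful choice of which of the finitely many infinite gaps receives the $s_{n+1}/s_n-1$ new diagonal copies at each stage.
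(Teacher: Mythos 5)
Your proposal follows essentially the same route as the paper's proof: both use condition (\ref{9}) to build an exhaustion of $\Flags(\mathcal{F},E)$ in which, after a first stage large enough to contain the finitely many finite-dimensional jumps, each step places the new diagonal copies into a single infinite-dimensional gap, so that the connecting maps are exactly the strict standard extensions of Example \ref{E-standard-extension}\,(b)--(c) (equivalently, the monochromatic E-graphs of Proposition \ref{P-embeddings}\,(d)), and then nested parabolic subgroups $P_n\subset\GL(s_n)$ with $P_{n+1}\cap\GL(s_n)=P_n$ are read off and assembled into $\mathbf{P}=\bigcup_n P_n$. The only inessential difference is one of direction --- you construct the parabolics via the maps $\alpha_n$ and match them to the canonical exhaustion of $\Flags(\mathcal{F},E)$, whereas the paper first builds the exhaustion by flag varieties and then extracts the parabolics.
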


\begin{proof}


In the situation of the theorem, the ind-variety $\mathbf{X}=\Flags(\mathcal{F},E)$ has an exhaustion 
$$
X_1\hookrightarrow X_2\hookrightarrow\cdots\hookrightarrow X_n\stackrel{\phi_n}{\hookrightarrow} X_{n+1}\hookrightarrow\cdots
$$
such that $X_n$ is a finite-dimensional variety of flags in the space $\mathbb{C}^{s_n}$ for some exhaustion $\{s_n\}_{n\geq 1}$ of $\sn$ with $s_1$ sufficiently large,
and $\phi_n:X_n\to X_{n+1}$ is one of the two maps from Example \ref{E-standard-extension} (b) and (c). 
Using the maps $\phi_n$, one constructs nested parabolic subgroups $P_n\subset\GL(s_n)$
such that
 $X_n\cong\GL(s_n)/P_n$ and $P_n=\GL(s_n)\cap P_{n+1}$ for all $n$. The union $\mathbf{P}=\bigcup_{n\geq 1}P_n$ is then a parabolic subgroup of $\GL(\sn)$ which satisfies the conditions of the theorem.
\end{proof}

\subsection{The general case} To treat the general case, we need to start with a definition.

\begin{definition}
Let $\mathcal{F}=\{F'_a,F''_a\}_{a\in A}$ be an $E$-compatible generalized flag and let $A'=\{a\in A:\dim F''_a/\dim F'_a<+\infty\}$. We say that
the ind-variety $\mathrm{Fl}(\mathcal{F},E)$ is {\em $\sn$-admissible}
if either $A'$ is finite or $A'$ is infinite and there are a exhaustion $\{s_n\}_{n\geq 1}$ for $\sn$ and a numbering $A'=\{k_n\}_{n\geq 1}$ (not necessarily compatible with the total order on $A'$) such that, for all $n\geq 0$:
\begin{center}
$\frac{\dim F''_{k_n}/F'_{k_n}}{s_n}\in\{1,\ldots,\frac{s_{n+1}}{s_n}-1\}$ and $s_n|\dim F''_a/F'_a$ for all $a\in A'\setminus\{k_1,\ldots,k_n\}$.\end{center}
\hfill $\blacksquare$
\end{definition}

\begin{theorem}
\label{T-6.3}
The following conditions are equivalent:
\begin{itemize}
\item[\rm (i)] The ind-variety $\Flags(\mathcal{F},E)$ is $\sn$-admissible.
\item[\rm (ii)] There is a parabolic subgroup $\mathbf{P}\subset\GL(\sn)$ and an isomorphism of ind-varieties $\Flags(\mathcal{F},E)\cong\GL(\sn)/\mathbf{P}$.
\end{itemize}
\end{theorem}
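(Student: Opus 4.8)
The plan is to prove the equivalence $\mathrm{(i)}\Leftrightarrow\mathrm{(ii)}$ by reducing everything to the combinatorial description of embeddings given in Proposition \ref{P-embeddings}, using $\sn$-graphs as the bookkeeping device. The case where $A'$ is finite is already handled: it is precisely Theorem \ref{T-6.1}, so I would dispose of it in one sentence and concentrate on the case where $A'$ is infinite. Throughout, the key structural fact is that an exhaustion of $\Flags(\mathcal{F},E)$ is determined by the dimensions $\dim F''_a/F'_a$ of the consecutive quotients, since by the last remark of Section \ref{section-4.2} the ind-variety depends only on the type of $\mathcal{F}$; the infinite-dimensional quotients (those indexed by $A\setminus A'$) contribute the ``room'' needed to absorb a doubled space, while the finite-dimensional quotients indexed by $A'$ are the ones whose dimensions must be compatible with the divisor sequence $\{s_n\}$.

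For the implication $\mathrm{(i)}\Rightarrow\mathrm{(ii)}$, I would proceed constructively. Given an $\sn$-admissible $\mathcal{F}$ with the exhaustion $\{s_n\}$ and the numbering $A'=\{k_n\}_{n\geq 1}$ supplied by the definition, the plan is to build an exhaustion $\Flags(\mathcal{F},E)=\bigcup_n X_n$ with $X_n$ a flag variety in $\mathbb{C}^{s_n}$ and each embedding $\phi_n\colon X_n\to X_{n+1}$ a strict diagonal embedding of flag varieties as described in Proposition \ref{P-embeddings}. The divisibility condition $s_n\mid \dim F''_a/F'_a$ for all $a\in A'\setminus\{k_1,\ldots,k_n\}$ guarantees that, at stage $n$, every finite-dimensional quotient not yet ``resolved'' has a dimension that is realizable inside $\mathbb{C}^{s_n}$ after division by the appropriate number of diagonal blocks; the condition $\frac{\dim F''_{k_n}/F'_{k_n}}{s_n}\in\{1,\ldots,\frac{s_{n+1}}{s_n}-1\}$ says exactly that the quotient $F''_{k_n}/F'_{k_n}$ first becomes correctly proportioned at stage $n+1$, fixing the colour-pattern of the corresponding $\sn$-graph. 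From the $\sn$-graph one reads off nested parabolic subgroups $P_n\subset\GL(s_n)$ with $P_n=\GL(s_n)\cap P_{n+1}$ via Lemma \ref{L5.1} and Remark following it, and sets $\mathbf{P}=\bigcup_n P_n$. Checking that $\bigcup_n X_n$ really recovers $\Flags(\mathcal{F},E)$ up to isomorphism amounts to verifying that the dimensions of the quotients stabilize to the prescribed values, which follows from the admissibility conditions.

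For the converse $\mathrm{(ii)}\Rightarrow\mathrm{(i)}$, the plan is to start from an isomorphism $\Flags(\mathcal{F},E)\cong\GL(\sn)/\mathbf{P}$ and exploit the rigidity supplied by Corollary \ref{L-embed-Picard} together with Lemma \ref{L2.5-new}\,(b). By the lemma, after passing to an equivalent exhaustion we may assume $\GL(\sn)=\bigcup_n\GL(s_n)$ with strict diagonal embeddings and $P_n=\GL(s_n)\cap\mathbf{P}$ parabolic in $\GL(s_n)$; this yields an exhaustion of $\GL(\sn)/\mathbf{P}$ by flag varieties $\GL(s_n)/P_n$ whose connecting maps are exactly of the type analysed in Proposition \ref{P-embeddings}. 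Comparing this exhaustion with the canonical one for $\Flags(\mathcal{F},E)$ (again using that both depend only on the type), I would extract the numbering of $A'$ and verify the two admissibility conditions: the divisibility $s_n\mid\dim F''_a/F'_a$ reflects the fact that a quotient not yet individuated by the embedding is built out of $d$ equal diagonal blocks, and the membership condition records the stage at which the $\sn$-graph first attaches an ordinary edge isolating $k_n$.

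The main obstacle I anticipate is the bookkeeping in $\mathrm{(ii)}\Rightarrow\mathrm{(i)}$: one must show that the abstract isomorphism of ind-varieties can be upgraded to a \emph{compatible} pair of exhaustions, so that the numerical data of $\mathcal{F}$ can be matched term-by-term against the data encoded in the $\sn$-graph of $\mathbf{P}$. This requires a cofinality/interleaving argument — intertwining the canonical exhaustion of $\Flags(\mathcal{F},E)$ with the $\GL(s_n)$-exhaustion of $\GL(\sn)/\mathbf{P}$ after passing to common refinements — and care that linearity of the connecting maps (forced because the target exhaustion also comes from a generalized flag) does not destroy the diagonal-block structure. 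Corollary \ref{L-embed-Picard} is the crucial tool here, as it prevents any index of $A'$ from being lost or merged under the comparison, ensuring the numbering $\{k_n\}$ is well-defined.
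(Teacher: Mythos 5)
Your proposal follows essentially the same route as the paper's proof: dispatching the finite-$A'$ case via Theorem \ref{T-6.1}, proving (i)$\Rightarrow$(ii) by constructively building nested parabolic subgroups $P_n\subset\GL(s_n)$ from the admissibility data and setting $\mathbf{P}=\bigcup_n P_n$, and proving (ii)$\Rightarrow$(i) by interlacing the canonical exhaustion of $\Flags(\mathcal{F},E)$ (whose maps are strict standard extensions) with the $\GL(s_n)/P_n$-exhaustion coming from Proposition \ref{P-embeddings}, with Corollary \ref{L-embed-Picard} supplying the Picard-group rigidity. The only step you leave implicit, which the paper makes explicit, is that the interlacing comparison maps must be upgraded from merely linear embeddings to (strict) standard extensions --- the paper does this by combining the linearity argument with the observation that these maps cannot factor through a direct product --- after which the divisibility and membership conditions are read off from the explicit form of standard extensions exactly as you describe.
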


\begin{proof}
 (i)$\Rightarrow$(ii): 
The ind-variety $\mathrm{Fl}(\mathcal{F},E)$ admits an exhaustion $\Flags(\mathcal{F},E)=\bigcup_nX_n$
with embeddings of the form
\begin{eqnarray}
\label{9-newnew}
\phi_n:X_n=\mathrm{Fl}(p_1,\ldots,p_{k_n};V_n) &\to & X_{n+1}=\mathrm{Fl}(q_{1},\ldots,q_{\ell_n};V_n\oplus C_n),\\ \{F_1,\ldots,F_{k_n}\} & \mapsto & \{F_{\tau(1)}\oplus C_1^n,\ldots,F_{\tau(\ell_n)}\oplus C^n_{\ell_n}\}\nonumber
\end{eqnarray}
(with $F_0:=0$, $F_{k_n+1}:=V_n$) for a nondecreasing surjective map $\tau:\{1,\ldots,\ell_n\}\to\{0,1,\ldots,k_n,k_n+1\}$ and a sequence $C^n_1\subset\ldots\subset C^n_{\ell_n}$ (with possible repetitions) of subspaces of $C_n$.

Assume that there is another exhaustion $\mathrm{Fl}(\mathcal{F},E)=\bigcup_nY_n$ for which the embeddings are as described in Proposition \ref{P-embeddings},
where $Y_n=\mathrm{Fl}(r_1,\ldots,r_{m_n};W_n)$ and $\dim W_n=s_n$ for an exhaustion $\{s_n\}$ of $\sn$. Then the two exhaustions interlace,
and there is no loss of generality in assuming that the interlacing holds for the sequences $(X_n)$ and $(Y_n)$, and not only for subsequences:
\[\xymatrix{X_n \ar@{^{(}->}[r]^{\phi_n} \ar@{^{(}->}[d]^{\xi_n}
& X_{n+1} \ar@{^{(}->}[d]^{\xi_{n+1}} \\
Y_n \ar@{^{(}->}[r]^{\psi_n} \ar@{^{(}->}[ur]^{\chi_n} & Y_{n+1}.} \]

\medskip

\noindent
{\it Claim.} The embedding $\xi_n$ is a standard extension.

First we show that $\xi_n$ is linear.
Arguing by contradiction, assume that there is a generator $[M_i]$ among the sequence 
$[M_1],\ldots,[M_{q-1}]$
of preferred generators of $\Pic\,X_n$ such that $\xi_n^*[M_i]$ is neither $0$ nor a preferred generator of $\Pic\,Y_n$. Since $\phi_n^*=\chi_n^*\circ\xi_{n+1}^*$, we have the inclusion
$\mathrm{Im}\,\phi_n^*\subset\mathrm{Im}\,\chi_n^*$, and due to Corollary \ref{L-embed-Picard}
we get that there is a generator $[L]\in\Pic\,Y_{n+1}$
such that 
\[\chi_n^*[L]=\sum_{j=1}^{q-1}\lambda_j [M_j]\quad\mbox{with $\lambda_i\not=0$.}\]
Since the map $\chi_n$ is an embedding, we have $\lambda_j\geq 0$ for all $j$ and in particular $\lambda_i\geq 1$.
The same argument applied  to $\xi_n$ implies that $\xi_n^*[M_j]$ should be a linear combination of the
preferred generators of $\Pic\,Y_n$ with nonnegative integer coefficients.
This implies that $\psi_n^*[L]=\xi_n^*\chi_n^*[L]$ is neither $0$ nor a preferred generator of $\Pic\,Y_n$, contradicting the linearity of the standard extension $\psi_n$.

Recall that in \cite{PT} the notion of an embedding factoring through a direct product is introduced. Note that $\xi_n$ cannot factor through a direct product: 
otherwise, $\psi_n$ would also factor through a direct product, 
which is impossible since this is a standard extension. Consequently, $\xi_n$ is a standard extension, and the claim is established.

\medskip
Now we can assume that $\xi_1$ is a strict standard extension. Since the maps $\phi_n$ are strict standard extensions, by using the formula for $\psi_n$ in Proposition \ref{P-embeddings} we derive that $\xi_n$ is a strict standard extension for all $n\geq 1$.

\medskip
Due to (\ref{9-newnew}) and Proposition \ref{P-embeddings}, one  has $W_n=V_n\oplus Z_n$ and the map $\xi_n$ has the form
$$
\xi_n:\{F_1,\ldots,F_{k_n}\}\mapsto
\{F_{\sigma(1)}\oplus Z_1^n,\ldots,F_{\sigma(p_n)}\oplus Z^n_{p_n}\}.
$$
Since this applies likewise to $\xi_{n+1}$, and taking into account the form of $\phi_n$ in (\ref{9-newnew}), we see that the map $\psi_n$ has the form
$$
\{F_1,\ldots,F_{k_n},F'_1,\ldots,F'_{p_n}\}\mapsto 
\{F_1,\ldots,F_{k_n},R^n_1,\ldots,R^n_{\ell_n},\Gamma^n_1,\ldots,\Gamma^n_{\delta_n}\}
$$
for an arbitrary map $\zeta_n:\{F'_1,\ldots,F'_{p_n}\}\mapsto \{R^n_1,\ldots,R^n_{\ell_n}\}$
as described in Proposition \ref{P-embeddings}, and where  $\Gamma^n_i$ are constant subspaces which are copies of $W_n$ in $W_{n+1}=\bigoplus_{i=1}^{d_n}W_n^{(i)}$.
This implies  $\dim V_n=d'_ns_n$ for some $d'_n\in\{1,\ldots,d_n=\frac{s_{n+1}}{s_n}\}$.

Since $\{V_n\oplus Z^n_1,\ldots,V_n\oplus Z^n_{p_n}\}=\{\Gamma^{n-1}_1,\ldots,\Gamma^{n-1}_{\delta_{n-1}}\}$,
we must have $p_n=\delta_{n-1}$
and the dimension of $Z_i^n$ is a multiple of $s_{n-1}$.
Therefore $\dim R_i^n$ is also a multiple of $s_{n-1}$ for all $i$.
Condition (ii) is established.

\medskip
(ii)$\Rightarrow$(i):
Let $d'_n=\frac{\dim F_{k_n}}{s_n}\in\{1,\ldots,d_n\}$ and set $V_n=F_{k_n}$. The conditions imply that we can choose a decomposition $W_n=V_n\oplus W_{n-1}^{(1)}\oplus\ldots\oplus W_{n-1}^{(d_n-d'_n)}$ where the $W_{n-1}^{(i)}$'s are copies of $W_{n-1}$ such that the strict standard extension
$\mathrm{Fl}_n(\mathcal{F},E)\to\mathrm{Fl}_{n+1}(\mathcal{F},E)$ is given by
$$
\phi_n:\{F_1,\ldots,F_{k_n}\}\mapsto \{F_{k_1}+C^n_1,\ldots,F_{k_{n+1}}+C^n_{k_{n+1}}\}
$$
with $C^n_i=W_{n-1}^{(1)}\oplus\ldots\oplus W_{n-1}^{(m_i)}=C_i'^n\oplus C_i''^n$ for some nondecreasing sequence $m_1,\ldots,m_{k_{n+1}}$.

Letting $\xi_n:\Flags_n(\mathcal{F},E)\to \Flags(\mathbf{t}_n;W_n)$ be given by
$$
\xi_n:\{F_1,\ldots,F_{k_n}\}\mapsto \{F_{k_1}+C'^n_1,\ldots,F_{k_{n+1}}+C'^n_{k_{n+1}}\},
$$
and $\psi_n:\Flags(\mathbf{t}_n;W_n)\to\Flags(\mathbf{t}_{n+1};W_{n+1})$
$$
\psi_n:
\{F_1,\ldots,F_{k_n}\}\mapsto 
\{F_1+ C''^{n+1}_1,\ldots,F_{\ell_n+1}+ C''^{n+1}_{\ell_{n+1}}\}
$$
(for suitable types $\mathbf{t}_n$), we get exhaustions of $\Flags(\mathcal{F},E)$ and a homogeneous space for $\GL(\sn)$, which interlace.
Hence if $\mathcal{F}$ satisfies the condition above, then we can realize $\mathrm{Fl}(\mathcal{F},E)$ as a homogeneous space for $\GL(\sn)$.
\end{proof}

\begin{remark}
It is shown in \cite[Corollary 5.40]{DP1} that $\GL(\sn)/\mathbf{B}$ is never projective when $\mathbf{B}$ is a Borel subgroup. 
On the other hand, according to \cite[Proposition 7.2]{Dimitrov-Penkov}, an ind-variety of generalized flags is projective if and only if
the total order on the flag can be induced by a subset of $(\mathbb{Z},\leq)$,
and Theorem \ref{T-6.3} shows that in many  situations  $\GL(\sn)/\mathbf{P}$ is projective.~\hfill $\blacksquare$
\end{remark}

\section{The case of direct products of ind-varieties of generalized flags}

\label{section-7}

In this section, we point out that many direct products of ind-varieties of generalized flags can be homogeneous spaces for the group $\GL(\sn)$.

\subsection{Direct products of ind-varieties}

Let $\mathbf{X}_i=\bigcup_{n\geq 1}X_{i,n}$ ($i\in I$) be a collection of ind-varieties indexed by $\mathbb{Z}_{>0}$ or a finite subset of it. 
For each $i\in I$ we pick an element $x_i\in X_{i,1}$ and we set $X_{i,0}=\{x_i\}$.
The direct product in the category of pointed ind-varieties is then given by 
$$\prod_{i\in I}\mathbf{X}_i:=\bigcup_{n\geq 1}\prod_{i\in I} X_{i,\phi_i(n)}$$ for a collection of increasing maps $\phi_i:\mathbb{Z}_{>0}\to\mathbb{Z}_{\geq 0}$ such that for every $n\in\mathbb{Z}_{>0}$ we have $\phi_i(n)=0$ for all but finitely many $i\in I$ (the definition does not depend essentially on the choice of the maps $\phi_i$).

\begin{remark}
(a) As a set, the direct product can be identified with the set of sequences $(y_i)_{i\in I}$ where $y_i\in \mathbf{X}_i$ for all $i\in I$ and $y_i=x_i$ for all but finitely many $i\in I$.

(b) For a finite set of indices $I$, as a set, $\prod_{i\in I}\mathbf{X}_i$ coincides with the usual cartesian product, and its structure of ind-variety is given by the exhaustion $\prod_{i\in I}\mathbf{X}_i:=\bigcup_{n\geq 1}X_{i,n}$.
\end{remark}

Fixing an index $i_0\in I$, there are a canonical projection
$$
\mathrm{proj}_{i_0}:\prod_{i\in I}\mathbf{X}_i\to \mathbf{X}_{i_0},\ (y_i)\mapsto y_{i_0}$$
and an embedding
$$\mathrm{emb}_{i_0}:\mathbf{X}_{i_0}\to \prod_{i\in I}\mathbf{X}_i,\ x\mapsto (y_i)\ \mbox{with}\ y_i=\left\{
\begin{array}{ll}
x_i & \mbox{if $i\not=i_0$}, \\ x & \mbox{if $i=i_0$,}
\end{array}
\right.
$$
which are morphisms of ind-varieties. 

If the product is endowed with an action of a group $G$, then each ind-variety $\mathbf{X}_i$ inherits an action of $G$ defined through the maps $\mathrm{proj}_{i}$ and $\mathrm{emb}_i$.
Conversely, if every ind-variety $\mathbf{X}_i$ is endowed with an action of a group $G$, then we obtain an action of $G$ on the product defined diagonally provided that the following condition is fulfilled:
\begin{equation}
\label{condition}
\mbox{every $g\in G$ fixes $x_i$ for all but finitely many $i\in I$.}
\end{equation}
(This condition is automatically satisfied in the case where $I$ is finite.) Moreover, in both directions, when $G=\mathbf{G}$ is an ind-group, we have that the obtained action is algebraic provided that the initial one is.
The following lemma is an immediate consequence of this discussion.

\begin{lemma}
Assume that the direct product $\prod_{i\in I}\mathbf{X}_i$ is a homogeneous space for an ind-group $\mathbf{G}$. Then, every ind-variety $\mathbf{X}_i$ is also a homogeneous space for $\mathbf{G}$.
\end{lemma}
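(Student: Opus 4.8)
The plan is to deduce the transitivity of $\mathbf{G}$ on an individual factor $\mathbf{X}_{i_0}$ directly from its transitivity on the product, using the canonical projection $\mathrm{proj}_{i_0}$ together with the section $\mathrm{emb}_{i_0}$. First I would unwind the hypothesis: to say that $\prod_{i\in I}\mathbf{X}_i$ is a homogeneous space for $\mathbf{G}$ means that $\mathbf{G}$ acts on it by an algebraic action which is transitive. Fixing an index $i_0\in I$, the goal is then to exhibit a transitive algebraic action of $\mathbf{G}$ on $\mathbf{X}_{i_0}$.

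For this I would invoke the discussion preceding the lemma, which equips $\mathbf{X}_{i_0}$ with the action inherited from the product through $\mathrm{proj}_{i_0}$ and $\mathrm{emb}_{i_0}$, namely $g\cdot y:=\mathrm{proj}_{i_0}\big(g\cdot\mathrm{emb}_{i_0}(y)\big)$, and which guarantees that this is a genuine $\mathbf{G}$-action by morphisms of ind-varieties (algebraicity being inherited from that of the product action together with the ind-group structure of $\mathbf{G}$). Granting this, transitivity is formal: given $y,y'\in\mathbf{X}_{i_0}$, the points $\mathrm{emb}_{i_0}(y)$ and $\mathrm{emb}_{i_0}(y')$ lie in the same $\mathbf{G}$-orbit of the product, so there is $g\in\mathbf{G}$ with $g\cdot\mathrm{emb}_{i_0}(y)=\mathrm{emb}_{i_0}(y')$; applying $\mathrm{proj}_{i_0}$ and using the identity $\mathrm{proj}_{i_0}\circ\mathrm{emb}_{i_0}=\mathrm{id}_{\mathbf{X}_{i_0}}$ yields $g\cdot y=y'$. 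Thus $\mathbf{X}_{i_0}\cong\mathbf{G}/\mathbf{Q}_{i_0}$, where $\mathbf{Q}_{i_0}$ is the stabilizer of a base point of $\mathbf{X}_{i_0}$ for the inherited action, which is the desired conclusion.

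The one step that is not purely formal, and which I regard as the main obstacle, is the verification that the assignment $g\mapsto\big(y\mapsto\mathrm{proj}_{i_0}(g\cdot\mathrm{emb}_{i_0}(y))\big)$ really defines a group action. This is false for an arbitrary action on a product---it requires that the $i_0$-component of $g\cdot z$ depend only on the $i_0$-component of $z$---and it is precisely the content of identifying the given action with the diagonal action assembled from the factor actions, as set up in the discussion before the lemma (under condition~\eqref{condition}). Once that identification is in place, the projection $\mathrm{proj}_{i_0}$ becomes $\mathbf{G}$-equivariant and every other assertion above is routine; I would carry out the remaining bookkeeping with stabilizers and orbit maps at the level of a single exhaustion of $\mathbf{G}$.
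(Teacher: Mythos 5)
Your proposal is correct and takes essentially the same route as the paper: the paper states this lemma as an ``immediate consequence'' of the preceding discussion, which is exactly your argument --- the inherited action $g\cdot y=\mathrm{proj}_{i_0}\bigl(g\cdot\mathrm{emb}_{i_0}(y)\bigr)$, algebraicity inherited from the product action, and the formal transitivity step using $\mathrm{proj}_{i_0}\circ\mathrm{emb}_{i_0}=\mathrm{id}_{\mathbf{X}_{i_0}}$. The ``main obstacle'' you single out --- that this assignment really defines a group action, i.e.\ that the $i_0$-component of $g\cdot z$ depends only on the $i_0$-component of $z$ --- is precisely the assertion the paper's discussion makes without further verification (and its condition (\ref{condition}) concerns only the converse assembly of factor actions into a product action), so your write-up is at the same level of rigor as the paper's proof, and indeed more explicit about where the genuine content lies.
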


Note also that a direct product $\prod_{i\in I}\mathbf{X}_i$ is locally projective if and only if it is the case of $\mathbf{X}_i$ for all $i\in I$.

\subsection{The case of ind-varieties of generalized flags}

We start with an example.

\begin{example}
Let $\sn=2^\infty$. We consider the space $V$ of countable dimension, endowed with its fixed basis $E=\{e_k\}_{k\in\mathbb{Z}_{>0}}$, and we set $V_n:=\langle e_1,\ldots,e_n\rangle$. We have the exhaustion $\GL(\sn)=\bigcup_{n\geq 1}\GL(V_{2^n})$ defined through the diagonal embedding $\GL(V_{2^n})\hookrightarrow \GL(V_{2^{n+1}})$, $x\mapsto\mbox{\scriptsize$\begin{pmatrix} x & 0 \\ 0 & x \end{pmatrix}$}$. Consider the sequence of parabolic subgroups
$$
P_n:=\mathrm{Stab}_{\GL(V_{2^n})}(V_1,V_{2^{n}-1}),\quad n\geq 2.
$$
In this way, $P_n\cap\GL(V_{2^{n-1}})=P_{n-1}$ for all $n\geq 3$. Moreover,
every quotient $\GL(V_{2^n})/P_n$ is a flag variety formed by flags $(F_1\subset F_2\subset V_{2^n})$ of length 2, and we have an embedding of flag varieties
$$
\GL(V_{2^{n-1}})/P_{n-1}\to\GL(V_{2^n})/P_n,\ (F_1,F_2)\mapsto (F_1,V_{2^{n-1}}+\overline{F_2})
$$
where as before the map
$$V_{2^{n-1}}=\langle e_1,\ldots,e_{2^{n-1}}\rangle\to \overline{V_{2^n-1}}=\langle e_{2^{n-1}+1},\ldots,e_{2^{n-1}+2^{n-1}}\rangle,\ v\mapsto\overline{v},$$ is the  isomorphism from Section \ref{section-5}.
For every $n$, 
this embedding factors through a direct product of grassmannians 
$$
\GL(V_{2^{n-1}})/P_{n-1} \to \Gr(1;V_{2^{n-1}})\times \Gr(2^{n-1}-1;\overline{V_{2^{n-1}}}) \to \GL(V_{2^n})/P_n,
$$
which allows us to chech that the ind-variety $\GL(\sn)/\mathbf{P}$, where $\mathbf{P}=\bigcup_n P_n$, is isomorphic as an ind-variety to a direct product of two ind-grassmannians.
\end{example}


The following theorem shows that many homogeneous spaces for $\GL(\sn)$ can be isomorphic to direct products of ind-varieties of generalized flags.

\begin{theorem}
\label{T-7.4}
Let $\mathbf{X}=\GL(\sn)/\mathbf{P}$ be a homogeneous space, defined by a parabolic subgroup $\mathbf{P}$. Assume that we have an exhaustion $\mathbf{X}=\bigcup_{n\geq 1}\GL(s_n)/P_{s_n}$ determined by an exhaustion $\{s_n\}_{n\geq 1}$ of $\sn$, where each embedding $\GL(s_n)/P_{s_n}\hookrightarrow \GL(s_{n+1})/P_{s_{n+1}}$ is linear. Then, $\mathbf{X}$ is isomorphic as an ind-variety to a direct product of ind-varieties of generalized flags $\prod_{i\in I}\Flags(\mathcal{F}^i,E^i)$
where $I$ is either $\mathbb{Z}_{>0}$ or a finite subset of it.
\end{theorem}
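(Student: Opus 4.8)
The plan is to use the linearity hypothesis to break each embedding into independent \emph{threads} and then pass to the direct limit thread by thread. By Proposition \ref{P-embeddings}, the embedding $\phi_n\colon\GL(s_n)/P_{s_n}\hookrightarrow\GL(s_{n+1})/P_{s_{n+1}}$ is encoded by an E-graph $\mathcal{G}_n$ with $d_n=s_{n+1}/s_n$ colours, and every member $V_j$ of a flag in the image decomposes as a direct sum $\bigoplus_{k}F_{i'_k}^{(k)}$ over the blocks $W_n^{(k)}$ of $W_{n+1}$. Because $\phi_n$ is linear, the criterion of Proposition \ref{P-embeddings}\,(c) forces each preferred generator of the Picard group to pull back to a single preferred generator or to $0$; combined with the non-crossing condition on the edges, this means that consecutive flag-steps sharing a colour of ordinary edges travel together, while steps of different colours are separated. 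In other words $\phi_n$ factors through a direct product of partial flag varieties, one factor per colour-class, on each of which the induced map has ordinary edges of a single colour and is therefore a \emph{strict} standard extension by Proposition \ref{P-embeddings}\,(d). This is the product analogue of the dichotomy used in the proof of Theorem \ref{T-6.3}, where a non-factoring linear embedding was shown to be a standard extension.

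First I would make the thread decomposition precise and functorial along the tower, using the $\sn$-graph bookkeeping of Section \ref{s-graphs}. A thread is a maximal family of flag-steps carried along by standard extensions from one level to the next; new threads may appear as $n$ grows, which is why the index set $I$ can be infinite. After passing to a subsequence of $\{s_n\}_{n\geq1}$ (which changes neither $\GL(\sn)$ nor $\mathbf{X}$ by Lemma \ref{L2.1}, and preserves linearity since a composition of linear embeddings is linear) I may assume that the assignment of threads is stable: each thread $i\in I$ then gives a tower of strict standard extensions, whose direct limit is an ind-variety of generalized flags $\Flags(\mathcal{F}^i,E^i)$ by the construction recalled in Section \ref{section-4.2}.

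Taking the product of the thread projections produces a canonical morphism
\[\Theta\colon\mathbf{X}\longrightarrow\prod_{i\in I}\Flags(\mathcal{F}^i,E^i),\]
which is injective because each $V_j$ is reconstructed from its block components. The main point, and the step I expect to be the chief obstacle, is to prove that $\Theta$ is an isomorphism of ind-varieties, i.e.\ that the full product is attained in the limit and not merely the sub-ind-variety cut out by the cross-thread nesting conditions $V_{j-1}\subset V_j$. The mechanism is that, by Remark \ref{R-constant}, the standard extension governing a thread inserts constant blocks $C_i(\phi_n)$; when a thread $i'$ lies above a thread $i$ in the flag order, the constant block adjoined to the subspaces of $i'$ eventually contains all the lower blocks hosting thread $i$, so that the inclusion relating the two threads becomes automatic after finitely many steps. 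This is exactly what occurs in the example preceding the theorem, where $F_1\subset V_{2^n}\oplus\overline{F_2}$ holds for free because $F_1\subset V_{2^n}$.

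I would therefore finish by fixing a target point $(\mathcal{G}^i)_{i\in I}$, choosing representatives of its components at a common finite level, and verifying through the explicit block formula of Proposition \ref{P-embeddings}\,(a) that for $n$ large these components assemble into a genuine flag in $W_{n+1}$ — the remaining inclusions being forced by the dominating constant blocks — which yields surjectivity of $\Theta$. Since each $\phi_n$ is a closed immersion and the thread projections are morphisms of ind-varieties, $\Theta$ is then an isomorphism onto the product. The delicate part throughout is the block-index bookkeeping needed to confirm that upper threads absorb the lower ones, for which the $\sn$-graph of Section \ref{s-graphs} is the natural organizing device.
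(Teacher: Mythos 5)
Your first step coincides with the paper's entire written argument: using Proposition \ref{P-embeddings}\,(a),(c),(d), linearity forces each $\phi_n$ to factor through a product $\prod_{i=1}^{d_n}\Flags(\underline{\ell}^{(i)};W_n^{(i)})$, with one factor per colour and with each component a strict standard extension. The paper draws exactly this commutative triangle and then asserts that the theorem ``follows from this construction.'' Where you go beyond the paper is in trying to make the passage to the limit explicit (threads, the map $\Theta$, surjectivity via dominating constant blocks), and it is precisely there that your argument has a gap.

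The gap is the claim that, after passing to a subsequence, ``the assignment of threads is stable.'' Linearity does \emph{not} prevent a single flag position at level $n$ from emitting ordinary edges of two \emph{different} colours into level $n+1$; it only constrains the right-hand endpoints. Concretely, the map
\[
(U_1,\ldots,U_k)\ \longmapsto\ \bigl(U_1^{(1)},\ldots,U_k^{(1)},\ W_n,\ W_n\oplus U_1^{(2)}\bigr),
\qquad W_n:=(\C^{s_n})^{(1)},
\]
is linear (each preferred generator pulls back to a single $[L_j]$ or to $0$), is encoded by a legitimate E-graph, yet the thread carrying $U_1$ splits: position $1$ feeds an ordinary edge of colour $1$ and an ordinary edge of colour $2$. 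If this happens at every level, no subsequence removes it, since composing such maps only propagates the splitting. This has two consequences for your plan. First, new threads are born at every level, and the thread born at level $m$ inherits, for any point $x\in X_n$ with $n<m$, a \emph{copy} of the subspace $U_1(x)$ (plus constants) as its coordinate. Hence, whenever $U_1(x)$ differs from the base point's, infinitely many coordinates of $\Theta(x)$ differ from the chosen base points, so $\Theta$ does not map into the direct product as defined in Section \ref{section-7} (which requires all but finitely many coordinates to equal the base points) --- no choice of base points repairs this. Second, even viewed as a map into the full cartesian product, the image of each $X_n$ under $\Theta$ consists of \emph{correlated} tuples (the late-born coordinates are copies of early ones); your dominating-constant-block mechanism correctly makes cross-thread \emph{inclusions} automatic, but it says nothing about these correlations, so the surjectivity argument does not go through as stated. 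Any complete proof must explain why the copying/splitting phenomenon washes out in the limit (or reorganize the factorizations so that it cannot occur); this is also the step the paper's own one-line conclusion leaves unaddressed, so you should not expect to recover it from Proposition \ref{P-embeddings} alone.
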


\begin{proof}
Fix $n\geq 1$. 
Let $d_n=\frac{s_{n+1}}{s_n}$ and fix a decomposition
\begin{equation}
\label{dec}
V_n=W_n^{(1)}\oplus\ldots\oplus W_n^{(d_n)}
\end{equation} of the space $V_n=\mathbb{C}^{s_{n+1}}$ as in Section \ref{section-5}.
Thus we are in the setting of Proposition \ref{P-embeddings}, and the embedding
$$
\phi_n:\GL(s_n)/P_{s_n}\hookrightarrow \GL(s_{n+1})/P_{s_{n+1}}
$$
can be encoded by an E-graph with $d_n$ colours in the sense of Proposition \ref{P-embeddings}\,(a). The formula therein, combined with the characterization of $\phi_n$ given in Proposition \ref{P-embeddings}\,(c), yields a commutative diagram
$$
\xymatrix{\GL(s_n)/P_{s_n} \ar[r]^{\phi_n} \ar[d]^{\prod_i\psi^{(i)}} & \GL(s_{n+1})/P_{s_{n+1}} \\ \prod_{i=1}^{d_n}\Flags(\underline{\ell}^{(i)};W_n^{(i)})
\ar[ru]^{\xi_n}}
$$
where $\psi^{(i)}$ is the embedding corresponding to the subgraph of $\mathcal{G}(\alpha,\beta)$ formed by removing all the ordinary edges which are not of colour $i$ ($\psi^{(i)}$ is a strict standard extension due to Proposition \ref{P-embeddings}\,(c)--(d)), $\underline{\ell}^{(i)}$ is an appropriate dimension vector, and the embedding $\xi_n$ is induced by the decomposition (\ref{dec}).
The theorem follows from this construction.
\end{proof}

The above proof yields the following sharpening of Theorem \ref{T-7.4}.

\begin{corollary}
In the framework of Theorem \ref{T-7.4}, let $\mathcal{G}$ be the $\sn$-graph corresponding to $\mathbf{P}$ in the sense of Section \ref{s-graphs}. Let $\mathcal{G}=\bigcup_{i\in\mathcal{I}}\mathcal{G}_i$ be a decomposition into subgraphs so that all ordinary edges of $\mathcal{G}_i$ are of the same colour. Then, 
the ind-variety $\mathbf{X}$ is isomorphic to a direct product of ind-varieties of generalized flags $\prod_{i\in\mathcal{I}}\mathbf{X}_i$ where $\mathbf{X}_i$ has an exhaustion with embeddings encoded by  $\mathcal{G}_i$.
\end{corollary}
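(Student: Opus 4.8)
The plan is to upgrade the stage-wise factorizations produced in the proof of Theorem~\ref{T-7.4} into a single global product decomposition governed by the subgraph decomposition $\mathcal{G}=\bigcup_{i\in\mathcal{I}}\mathcal{G}_i$. Recall that for each $n$ the proof of Theorem~\ref{T-7.4} exhibits a commutative triangle in which the linear embedding $\phi_n\colon\GL(s_n)/P_{s_n}\hookrightarrow\GL(s_{n+1})/P_{s_{n+1}}$ factors as $\xi_n\circ\prod_k\psi^{(k)}$ through the product $\prod_{k=1}^{d_n}\Flags(\underline{\ell}^{(k)};W_n^{(k)})$, with one factor $\psi^{(k)}$ per colour of the E-graph relating the columns $B_n$ and $B_{n+1}$. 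First I would reinterpret this colourwise factorization as the restriction, to the single pair of columns $(B_n,B_{n+1})$, of the global decomposition $\mathcal{G}=\bigcup_i\mathcal{G}_i$: each subgraph $\mathcal{G}_i$ meets this pair of columns in a sub-E-graph, and grouping the $d_n$ colourwise factors of Theorem~\ref{T-7.4} according to the index $i$ of the subgraph to which the relevant edges belong recovers precisely the factors indexed by $\mathcal{I}$.

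The key step is to identify each factor $\mathbf{X}_i$ as an ind-variety of generalized flags. By hypothesis all ordinary edges of $\mathcal{G}_i$ share a single colour within each consecutive pair of columns, so Proposition~\ref{P-embeddings}\,(d) shows that the embedding of flag varieties encoded by $\mathcal{G}_i$ between the columns $B_n$ and $B_{n+1}$ is a \emph{strict} standard extension. Thus $\mathbf{X}_i$ carries an exhaustion $\mathbf{X}_i=\bigcup_n X_{i,n}$ by finite-dimensional flag varieties whose connecting maps are all strict standard extensions, and by the description of ind-varieties of generalized flags recalled in Section~\ref{section-4.2} (see \cite{PT,Dimitrov-Penkov}) such a direct limit is isomorphic to an ind-variety of generalized flags $\Flags(\mathcal{F}^i,E^i)$ for a suitable generalized flag $\mathcal{F}^i$ and basis $E^i$.

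It then remains to assemble these identifications into an isomorphism $\mathbf{X}\cong\prod_{i\in\mathcal{I}}\mathbf{X}_i$ of pointed ind-varieties. Reorganized as above, the commutative triangles of Theorem~\ref{T-7.4} exhibit the exhaustion $\mathbf{X}=\bigcup_n\GL(s_n)/P_{s_n}$ and the product exhaustion of $\prod_i\mathbf{X}_i$ as two interlacing directed systems, with $\prod_k\psi^{(k)}$ and $\xi_n$ as the interlacing morphisms, so that the two systems have the same direct limit. That $\xi_n$ maps the product into the next homogeneous space --- rather than into a nontrivial further product --- uses the standing linearity of $\phi_n$: by Proposition~\ref{P-embeddings}\,(c), linearity forbids an ordinary edge from changing colour across the relevant range, which is exactly the condition preventing the colourwise factors from interacting. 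I would also verify condition~(\ref{condition}): each $g\in\GL(\sn)$ lies in some $\GL(s_n)$ and hence fixes the basepoints of all but the finitely many factors $\mathbf{X}_i$ already visible among the first $n$ columns, so that the diagonal action on the product is well defined.

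The main obstacle I anticipate lies in making ``$\mathbf{X}_i$ has an exhaustion encoded by $\mathcal{G}_i$'' precise and stage-consistent. One must check that the restriction of $\mathcal{G}_i$ to each consecutive pair of columns is again a genuine E-graph --- every vertex incident with an edge, the bottom-left vertex incident with one bounding edge per colour in use, and no two edges of the same colour crossing --- so that its connecting maps are well defined; and one must track how the bounding edges of $\mathcal{G}$, which carry the constant subspaces $C_j(\phi_n)$ of Remark~\ref{R-constant}, are distributed among the $\mathcal{G}_i$ compatibly from one stage to the next. Following a thread of colours through successive columns to a single index $i\in\mathcal{I}$, and verifying that this threading respects the bounding edges, is the combinatorial heart of the argument; once it is in place, the identification of each $\mathbf{X}_i$ and the product isomorphism follow formally from Proposition~\ref{P-embeddings} and the product construction of Section~\ref{section-7}.
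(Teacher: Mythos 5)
Your proposal is correct and takes essentially the same route as the paper: the paper's entire proof of this corollary is the remark that the construction in the proof of Theorem \ref{T-7.4} --- the factorization of each $\phi_n$ through $\prod_{k}\Flags(\underline{\ell}^{(k)};W_n^{(k)})$, with each colourwise factor $\psi^{(k)}$ a strict standard extension by Proposition \ref{P-embeddings}\,(c)--(d) --- can be regrouped according to the given decomposition $\mathcal{G}=\bigcup_{i\in\mathcal{I}}\mathcal{G}_i$, which is exactly what you do, including the identification of each factor $\mathbf{X}_i$ as an ind-variety of generalized flags via its exhaustion by strict standard extensions. The stage-by-stage combinatorial bookkeeping you flag as the remaining work (that each $\mathcal{G}_i$ restricts to genuine E-graphs and that bounding edges are distributed consistently) is left implicit in the paper, so your write-up is, if anything, more explicit than the original.
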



%

%

\section*{Outlook}

We see the results of this paper as a small first step in the study of locally projective homogeneous ind-spaces of locally reductive ind-groups. One inevitable question for a future such study is, given two non-isomorphic locally reductive ind-groups $\mathbf{G}$ and $\mathbf{G}'$, when are two homogeneous spaces $\mathbf{G}/\mathbf{P}$ and $\mathbf{G}'/\mathbf{Q}$ isomorphic as ind-varieties ? A further natural direction of research could be a comparison of Bott--Borel--Weil type results on $\mathbf{G}/\mathbf{P}$ and $\mathbf{G}'/\mathbf{Q}$. We finish the paper by pointing out that the reader can verify that Theorem \ref{T-6.1} remains valid if one replaces $\GL(\sn)$ by any pure diagonal ind-group in the terminology of \cite{BZ}.





\end{document}